\declaretheorem[style=plain,numberwithin=section,name=Theorem]{theorem}
\declaretheorem[style=plain,sibling=theorem,name=Lemma]{lemma}
\declaretheorem[style=plain,sibling=theorem,name=Proposition]{proposition}
\declaretheorem[style=plain,sibling=theorem,name=Corollary]{corollary}
\declaretheorem[style=plain,sibling=theorem,name=Claim]{claim}
\declaretheorem[style=definition,sibling=theorem,name=Definition,qed=$\blacksquare$]{definition}
\declaretheorem[style=definition,sibling=theorem,name=Example,qed=$\blacksquare$]{example}
\declaretheorem[style=definition,sibling=theorem,name=Remark,qed=$\blacksquare$]{remark}
\newcommand{\Z}{\mathbb{Z}}
\newcommand{\K}{\mathcal{K}}
\newcommand{\F}{\mathcal{F}}
\newcommand{\MKtMP}{{\sc Maximum ${\cal K}$-Free $t$-Matching Problem}\xspace}
\newcommand{\KbFP}{{\sc ${\cal K}$-Free $b$-Factor Problem}\xspace}
\newcommand{\MKbMP}{{\sc Maximum ${\cal K}$-Free $b$-Matching Problem}\xspace}
\newcommand{\BEC}{{\sc Boolean Edge-CSP}}
\newcommand{\BECG}{{\sc Boolean Edge-CSP}$(\Gamma_{{\rm cp}\text{-}{\rm jump}})$\xspace}
\newcommand{\BC}{{\sc Boolean CSP}}
\newcommand{\cond}{{\rm RD}}
\title{Finding a Maximum Restricted $t$-Matching\\via Boolean Edge-CSP}
\author{Yuni Iwamasa\thanks{Kyoto University.
E-mail: iwamasa@i.kyoto-u.ac.jp}
\and
Yusuke Kobayashi\thanks{Kyoto University.
E-mail: yusuke@kurims.kyoto-u.ac.jp}
\and
Kenjiro Takazawa\thanks{Hosei University.
E-mail: takazawa@hosei.ac.jp}
}
\date{}
\begin{document}
\maketitle
\begin{abstract}
The problem of finding a maximum $2$-matching without short cycles has received significant attention due to its relevance to the Hamilton cycle problem. 
This problem is generalized to finding a maximum $t$-matching which excludes  
specified complete $t$-partite subgraphs, where $t$ is a fixed positive integer. 
The polynomial solvability of this generalized problem remains an open question.  
In this paper, we present polynomial-time algorithms for the following two cases of this problem: 
in the first case the forbidden complete $t$-partite subgraphs are edge-disjoint; 
and in the second case the maximum degree of the input graph is at most $2t-1$. 
Our result for the first case extends the previous work of Nam (1994)
showing the polynomial solvability of the 
problem of finding a maximum $2$-matching without cycles of length four, 
where the cycles of length four are vertex-disjoint. 
The second result expands upon the works of B\'{e}rczi and V\'{e}gh (2010) and Kobayashi and Yin (2012), 
which focused on graphs with maximum degree at most $t+1$. 
Our algorithms are obtained from exploiting the discrete structure of restricted $t$-matchings and employing an algorithm for the Boolean edge-CSP.
\begin{description}
    \item[Keywords] Polynomial algorithm, $C_k$-free $2$-matching, Jump system, Boolean edge-CSP
\end{description}
\end{abstract}

\section{Introduction}
\label{sec:intro}

The matching problem and its generalizations have been one of the most primary topics 
in combinatorial optimization, and have been the subject of a large number of studies. 
A typical generalization of a matching is 
a {\em $t$-matching} for an arbitrary positive integer $t$: 
an edge subset $M$ in a graph is a $t$-matching\footnote{Such an edge set is 
sometimes called a {\em simple $t$-matching} in the literature, 
but we omit the adjective ``simple'' because 
in this article a $t$-matching is always an edge subset and we never put multiplicities on the edges.} 
if each vertex is incident to at most $t$ edges in $M$. 

While the problem of finding a $t$-matching of maximum cardinality can be solved in polynomial time by a matching algorithm, 
the problem becomes much more difficult, 
typically NP-hard, when additional constraints are imposed. 
The constraints discussed in 
this paper is to exclude certain subgraphs. 
Let $G=(V, E)$ be a graph and 
let ${\cal K}$ be a family of subgraphs of $G$. 
For a subgraph $K$ of $G$, 
let $V(K)$ and $E(K)$ denote 
the vertex set and the edge set of $K$, 
respectively. 
\begin{definition}
An edge subset $M \subseteq E$ is \emph{${\cal K}$-free} if 
$E(K) \not\subseteq M$ for any $K \in {\cal K}$. 
\end{definition}
The problem formulated below is the central issue in this paper, 
whose 
relevance 
will be described in detail in Section \ref{SECtmatching}.

\paragraph{\underline{\MKtMP}}
Given a graph $G=(V, E)$ and a family $\K$ of subgraphs of $G$, 
find a $\K$-free $t$-matching $M\subseteq E$ of maximum cardinality.  

\medskip

Our 
primary contribution are the following two theorems,  showing the polynomial solvability of 
certain classes of 
\MKtMP.
The first result concerns the case where 
${\cal K}$ is an edge-disjoint family of \emph{$t$-regular complete partite subgraphs} of $G$. 
While we defer the definition to Section \ref{sec:notation}, 
here we remark that 
a complete graph 
$K_{t+1}$ and 
a complete bipartite graph 
$K_{t,t}$ are examples of 
a $t$-regular complete partite graph.

\begin{theorem}
\label{thm:disjointtmatching}
For a fixed positive integer $t$, 
\MKtMP
can be solved in polynomial time 
if 
all the subgraphs in ${\cal K}$ 
are $t$-regular complete partite and pairwise edge-disjoint. 
\end{theorem}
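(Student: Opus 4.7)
The plan is to reduce this case of \MKtMP to an instance of \BECG and then invoke the polynomial-time algorithm for that constraint class (which, as indicated in the abstract, is a central ingredient of the paper).

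First, I would construct an auxiliary graph $H$ from $G$ and $\mathcal{K}$ by taking all of $V$ and $E$ and additionally introducing, for each $K \in \mathcal{K}$, a new vertex $u_K$ together with an auxiliary edge $\{u_K, v\}$ for every $v \in V(K)$. The Boolean variables of the edge-CSP are indexed by the edges of $H$: an original-edge variable $x_e$ indicates whether $e$ lies in the candidate $t$-matching $M$, while each auxiliary variable $x_{\{u_K, v\}}$ is intended to indicate that all $t$ edges of $K$ incident to $v$ are selected, i.e.\ that $v$ is ``saturated inside $K$''.

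Second, at each original vertex $v \in V$ I would impose the vertex constraint that the selection on its incident original edges has at most $t$ ones and that each auxiliary variable $x_{\{u_K, v\}}$ (for $K \ni v$) takes the value $1$ if and only if all $t$ edges of $K$ incident to $v$ are selected; at each auxiliary vertex $u_K$, I would impose the ``not all ones'' constraint on its incident auxiliary variables, which is precisely the $K$-free condition $E(K) \not\subseteq M$. The overall objective is to maximize $\sum_{e \in E} x_e$, which fits within the \BECG framework by assigning weight $+1$ to original-edge variables and weight $0$ to auxiliary ones.

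Third, the heart of the reduction is to verify that every vertex constraint of $H$ belongs to the class $\Gamma_{{\rm cp}\text{-}{\rm jump}}$. The edge-disjointness of $\mathcal{K}$ is crucial here: at each original vertex $v$ the incident edges partition into the disjoint $t$-blocks $E(K) \cap E_v$, one per $K \ni v$, together with the free edges outside $\bigcup_{K \in \mathcal{K}} E(K)$, so the complete-partite structure of each $K$ acts on a separate block and the local feasible set decomposes as a product of simple pieces. The ``not all ones'' relation at each $u_K$ should also lie in $\Gamma_{{\rm cp}\text{-}{\rm jump}}$. The main obstacle is precisely this third step: showing that the combined vertex constraint at each $v \in V$---which couples the degree-$\leq t$ bound, the indicator couplings with the auxiliary variables $x_{\{u_K, v\}}$, and the free edges---genuinely belongs to $\Gamma_{{\rm cp}\text{-}{\rm jump}}$. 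This will require exploiting the $t$-regular complete partite structure of each $K \in \mathcal{K}$ and the edge-disjoint block decomposition at $v$ to verify the jump-system-type axioms built into the class. Once that is established, Theorem~\ref{thm:disjointtmatching} follows immediately from the polynomial-time solvability of \BECG.
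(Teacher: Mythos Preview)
Your reduction has the right overall shape, but the vertex constraints you define do \emph{not} lie in $\Gamma_{{\rm cp}\text{-}{\rm jump}}$, so the reduction fails as stated.

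At an auxiliary vertex $u_K$ you impose the ``not all ones'' relation on $\{0,1\}^{V(K)}$. This set contains both the all-zero vector (sum $0$) and a unit vector (sum $1$), so it is not constant-parity. The same problem arises at an original vertex $v$: the relation you describe contains the all-zero vector as well as a vector with a single original edge selected, so again the parity is not constant. Worse, the indicator coupling at $v$ is not even a jump system: with $t=2$ and $\delta(v)=\delta_K(v)$ for a single $K$, the feasible triples are $(0,0;0)$, $(1,0;0)$, $(0,1;0)$, $(1,1;1)$; taking $x=(0,0;0)$, $y=(1,1;1)$, and the $(x,y)$-increment $s$ on the auxiliary coordinate gives $x+s=(0,0;1)\notin J$, and neither of the two available $(x+s,y)$-increments lands in $J$. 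So the axiom fails. A second issue is that \BECG, as formulated and solved in the paper (Theorem~\ref{thm:EdgeCSP}), is a \emph{feasibility} problem; you are tacitly assuming an optimization version.

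The paper avoids both obstacles. It first treats the $b$-\emph{factor} problem (equality constraints), so that the relation at each $v\in V$ becomes the basis family of a uniform matroid, which is constant-parity. For the forbidden subgraphs it does not keep the edges of $K$ but replaces them by $t$ parallel edges from each $v\in V(K)$ to $r_K$; the relation at $r_K$ then encodes the degree sequences $D_K$ of $K$-free $b$-matchings in $K$, which is a constant-parity jump system by the Kobayashi--Szab\'o--Takazawa result (Theorem~\ref{thm:parititejump}), and its splitting to the parallel-edge coordinates preserves that structure. Maximization is handled separately: the degree sequences of all $\mathcal{K}$-free $b$-matchings form a constant-parity jump system (again Theorem~\ref{thm:parititejump}), so one can optimize over it by polynomially many membership (i.e., $b'$-factor feasibility) queries (Lemma~\ref{lem:factor2matching}). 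If you want to salvage your approach, you would need to rework both the auxiliary gadget and the original-vertex constraint so that each is genuinely constant-parity, which essentially forces you back to the factor-plus-jump-system-search strategy the paper uses.
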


In the second result, 
instead of the edge-disjointness of the subgraphs in $\mathcal{K}$, 
we assume that 
the maximum degree of the input graph $G$ is bounded. 

\begin{theorem}
\label{thm:degreetmatching}
For a fixed positive integer $t$, 
\MKtMP
can be solved in polynomial time 
if 
all the subgraphs in 
${\cal K}$ are $t$-regular complete partite 
and 
the maximum degree of $G$ is at most $2t-1$. 
\end{theorem}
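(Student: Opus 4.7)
The plan is to reduce \MKtMP under these hypotheses to a polynomially-sized instance of \BECG and then invoke the polynomial-time algorithm for that CSP developed in the paper. The guiding structural observation is the following: for a $t$-regular subgraph $K$, writing $E_v(K):=E(K)\cap \delta(v)$ for $v\in V(K)$, if $E(K)\subseteq M$ then $E_v(K)\subseteq M\cap \delta(v)$; since $|E_v(K)|=t$ and $|M\cap \delta(v)|\le t$, one in fact has $E_v(K)=M\cap \delta(v)$ at every $v\in V(K)$. Thus which forbidden $K$'s can appear ``fully'' inside a $t$-matching $M$ is read off purely from the local configurations $\{M\cap \delta(v)\}_v$, which is exactly the data available to the vertex constraints of a Boolean edge-CSP.

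The degree bound $\deg_G(v)\le 2t-1$ enters as follows. If distinct $K_1,K_2\in \K$ both contain $v$ and are both fully in $M$, then by the observation above $E_v(K_1)=E_v(K_2)=M\cap \delta(v)$; that is, all forbidden subgraphs through $v$ that are simultaneously fully contained in $M$ share a common ``footprint'' at $v$. Hence at each vertex $v$ there is essentially only one $t$-subset of $\delta(v)$ that can be the dangerous footprint for any given $M$, namely $M\cap \delta(v)$ itself. This uniqueness-of-footprint phenomenon, which would fail once $\deg_G(v)$ reaches $2t$, is what allows the $\K$-free condition to be encoded by local vertex relations falling in the tractable constraint language $\Gamma_{{\rm cp}\text{-}{\rm jump}}$.

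With this in hand, I would build an edge-CSP instance whose variables are the edges of $G$, with value $1$ representing membership in $M$. At each vertex $v$, a constraint $R_v$ is imposed that (i) forbids configurations $S\subseteq \delta(v)$ with $|S|>t$, and (ii) interacts with small auxiliary gadgets (one per $K\in \K$) so as to certify, for each $K$, that at least one $v\in V(K)$ satisfies $M\cap \delta(v)\ne E_v(K)$. A purely local version of (ii) that simply bans $S=E_v(K)$ at every $v$ would over-forbid, so the coordination between the vertices of $K$ must be routed through a gadget; the complete partite shape of $K$, together with the uniqueness-of-footprint property above, are what should make this coordination representable inside $\Gamma_{{\rm cp}\text{-}{\rm jump}}$.

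The main obstacle, in my view, is this last step: designing the gadgets and verifying that every resulting local relation truly lies in $\Gamma_{{\rm cp}\text{-}{\rm jump}}$, in particular satisfies the underlying jump-system exchange axiom. Once the reduction is established, a maximum $\K$-free $t$-matching of $G$ corresponds to a maximum-weight satisfying assignment of the edge-CSP instance (with unit weight on the original edges and zero weight on gadget edges), and the polynomial-time algorithm for \BECG yields Theorem~\ref{thm:degreetmatching}.
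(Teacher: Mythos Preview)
Your proposal has a genuine gap: the crucial gadget construction is left entirely unspecified, and you yourself flag it as ``the main obstacle.'' Without it there is no reduction and hence no proof. Moreover, your diagnosis of where the bound $2t-1$ enters is off. The ``uniqueness of footprint'' you describe---that any $K\in\K$ fully contained in $M$ and passing through $v$ must satisfy $E_v(K)=M\cap\delta(v)$---is simply a consequence of $t$-regularity of $K$ together with $|M\cap\delta(v)|\le t$; it holds for arbitrary maximum degree and so cannot be the mechanism that singles out $2t-1$. A more concrete obstruction to any direct gadget scheme along the lines of \Cref{thm:disjointbfactor}: that reduction deletes $E(K)$ and routes its role through a single new vertex $r_K$. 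When distinct $K,K'\in\K$ share an edge $e$, the same edge would have to be routed to both $r_K$ and $r_{K'}$, and it is unclear how to do this while keeping every variable in exactly two constraints and every relation in $\Gamma_{{\rm cp}\text{-}{\rm jump}}$. Finally, your last sentence appeals to a maximum-weight version of \BECG, but \Cref{thm:EdgeCSP} is a feasibility result; the paper never establishes a weighted variant.

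The paper's route is quite different. It does \emph{not} encode the full $\K$-free condition as a single edge-CSP instance. Instead it greedily extracts a subfamily $\K^*\subseteq\K$ satisfying the relaxed-disjointness condition~\eqref{cond:star}, solves the $\K^*$-free $b$-factor problem via \Cref{thm:starbmatching} (this is where the edge-CSP reduction lives), and then \emph{repairs} the resulting $b$-factor $M$ into a $\K$-free one by an iterated local $4$-edge swap (\Cref{clm:reduction}). The degree bound $2t-1$ is used precisely in this repair step: it forces any $K\in\K\setminus\K^*$ fully contained in $M$ to share an edge with some $K^*\in\K^*$ at a common vertex $u$ (since $|\delta_K(u)|+|\delta_{K^*}(u)|=2t>|\delta_G(u)|$), and it guarantees a swap partner $e^*\in\delta_{K^*}(v)\cap M$ at a vertex $v\in V(K^*)\setminus V(K)$. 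The complete-partite structure of $K^*$ then supplies the two replacement edges. Maximization is handled not by a weighted CSP but by the jump-system reduction of \Cref{lem:factor2matching}, which turns \MKbMP into polynomially many $b$-factor feasibility instances.
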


Theorems~\ref{thm:disjointtmatching} and~\ref{thm:degreetmatching} 
offer larger polynomially solvable classes of 
\MKtMP
than the previous work introduced in \Cref{SECtmatching} below. 
In addition, 
we will describe 
the relevance of Theorems \ref{thm:disjointtmatching} and \ref{thm:degreetmatching} to the literature, 
together with 
their extensions and variants in the subsequent sections. 
Here we just remark that 
the assumption on the complete partiteness of 
the forbidden subgraphs 
in Theorems~\ref{thm:disjointtmatching} and~\ref{thm:degreetmatching} 
is unavoidable, 
because 
the problem is NP-hard without this assumption (see Proposition \ref{PROPNP-hard} below).

\subsection{Previous Work on Restricted $t$-Matchings}
\label{SECtmatching}

\MKtMP
has its origin in the case
 where $t=2$ and ${\cal K}$ is composed of short cycles.  
Let 
$k$ be a positive integer. 
If 
${\cal K}$ is the set of all cycles of length at most $k$, 
then 
a $\mathcal{K}$-free $2$-matching 
is referred to as a \emph{$C_{\le k}$-free $2$-matching}, 
and 
{\sc Maximum ${\cal K}$-Free $2$-Matching Problem}
as 
the \emph{$C_{\le k}$-free $2$-matching problem}. 
Similarly, 
if ${\cal K}$ is the set of all cycles of length exactly $k$, 
then 
a $\mathcal{K}$-free $2$-matching 
is referred to as a 
\emph{$C_{k}$-free $2$-matching}, 
and 
{\sc Maximum ${\cal K}$-Free $2$-Matching Problem}
as 
the \emph{$C_{k}$-free $2$-matching problem}. 
The 
$C_{\le k}$-free and $C_k$-free $2$-matching problems 
have attracted significant attention 
because of 
their relevance to the Hamilton cycle problem;
for $k \ge |V|/2$, a $C_{\le k}$-free $2$-matching of cardinality $|V|$ is a Hamilton cycle.
When $k$ is small, the $C_{\le k}$-free $2$-matching problem is not directly used to find Hamilton cycles, 
but 
it can be applied to designing 
approximation algorithms for related problems such as the graph-TSP and the minimum $2$-edge-connected spanning subgraph problem. For example, in a recent paper~\cite{KN2023}, an approximation algorithm for the minimum 2-edge-connected spanning subgraph problem is provided using a maximum $C_{\le 3}$-free $2$-matching.

The complexity of 
the $C_{\le k}$-free $2$-matching problem depends on the value of $k$. 
It is straightforward to see that 
this problem 
can be solved in polynomial time 
for $k\le 2$. 
For $k=3$, 
Hartvigsen~\cite{HartD} gave
a polynomial-time algorithm for the $C_{\le 3}$-free $2$-matching problem. 
For $k \ge 5$, 
Papadimitriou proved the NP-hardness of 
the $C_{\le k}$-free $2$-matching problem  (see \cite{CP80}). 

For the case $k=4$, 
it is open whether 
the $C_{\le 4}$-free and $C_4$-free $2$-matching problems can be solved in polynomial time, 
and these problems have rich literature of polynomial-time algorithms for several special cases. 
First, 
for subcubic graphs, i.e., graphs with maximum degree at most three, 
polynomial-time algorithms for the $C_{4}$-free and the $C_{\le 4}$-free $2$-matching problems were 
given by B\'{e}rczi and Kobayashi~\cite{BK12} and B\'{e}rczi and V\'{e}gh~\cite{BV10}, respectively. 
Simpler algorithms for both problems in subcubic graphs (and for some of their weighted variants) were designed by Hartvigsen and Li~\cite{HL11} and 
by Paluch and Wasylkiewicz~\cite{PW21IPL}.
It is worth noting that a connection between the $C_{4}$-free matching problem and a connectivity augmentation problem is highlighted in~\cite{BK12},
underscoring the significance of the $C_{4}$-free matching problem.
Second, 
for the graphs in which the cycles of length four are vertex-disjoint, 
Nam~\cite{Nam94} gave a polynomial-time algorithm for the $C_{4}$-free $2$-matching problem. 
Finally, 
for bipartite graphs, several of polynomial-time algorithms 
are devised; see Section~\ref{sec:related} for details. 

Let $t$ be an arbitrary positive integer. 
The 
$C_{k}$-free $2$-matching problem is   
generalized to 
\MKtMP
for general $t$ 
in the following way. 
Let $K_t$ denote the complete graph with $t$ vertices, 
and 
$K_{t,t}$ the complete bipartite graph in which each color class has $t$ vertices. 
Here, note that a cycle of length three is isomorphic to $K_3$. 
Thus, 
the $C_{3}$-free $2$-matching problem can be naturally generalized to 
\MKtMP,
where ${\cal K}$ is the set of all subgraphs that are isormorphic to $K_{t+1}$. 
We refer to this special case of 
\MKtMP
as the \emph{$K_{t+1}$-free $t$-matching problem}.  
Similarly, 
by noting that 
a cycle of length four is isomorphic to $K_{2,2}$, 
we can generalize 
the $C_{4}$-free $2$-matching problem 
to the 
\emph{$K_{t,t}$-free $t$-matching problem}. 
This  
is another special class of 
\MKtMP,
where ${\cal K}$ is the set of all subgraphs isormorphic to $K_{t,t}$.

The polynomial solvability of these two problems are open. 
For 
certain special cases of 
\MKtMP,
however, 
several polynomial-time algorithms 
are presented, corresponding to those for 
the $C_{\le k}$-free and $C_k$-free $2$-matching problems. 
First, 
B\'{e}rczi and V\'{e}gh~\cite{BV10} gave a polynomial-time algorithm for 
\MKtMP
for the case where ${\cal K}$ consists of $K_{t+1}$'s and $K_{t,t}$'s 
and the input graph $G$ has maximum degree at most $t+1$. 
This extends that for 
the $C_{\le 4}$-free $2$-matching problem 
in subcubic graphs.
Second, 
Kobayashi and Yin~\cite{KY12} presented a polynomial-time algorithm for 
\MKtMP 
for 
the case where 
${\cal K}$ consists of all the subgraphs isomorphic to a fixed $t$-regular complete partite graph 
and the input graph $G$ has maximum degree at most $t+1$. 
Kobayashi and Yin \cite{KY12} also proved that this assumption on $\mathcal{K}$ 
is inevitable. 
\begin{proposition}[follows from Kobayashi and Yin \cite{KY12}]
\label{PROPNP-hard}
If $H$ is a connected $t$-regular graph which is not complete partite 
and ${\cal K}$ is the set of all subgraphs isomorphic to $H$, then 
\MKtMP
is NP-hard even 
when  
the maximum degree of $G$ is at most $t+1$
and 
the subgraphs in $\mathcal{K}$ are 
pairwise edge-disjoint.
\end{proposition}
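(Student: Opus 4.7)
The plan is to adapt the NP-hardness reduction of Kobayashi and Yin~\cite{KY12} to \MKtMP and to verify (or slightly modify) the construction so that the forbidden copies of $H$ are pairwise edge-disjoint while the input graph retains maximum degree at most $t + 1$. The starting structural observation is the standard characterization of complete multipartite graphs: a graph fails to be complete multipartite if and only if it has an induced subgraph on three vertices consisting of a single edge and an isolated vertex. Accordingly, I would fix vertices $a, b, c \in V(H)$ with $ab \in E(H)$ and $ac, bc \notin E(H)$, and use the edge $ab$ as a ``signature'' for each gadget copy of $H$ constructed in the reduction.

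I would reduce from the NP-hard source problem used in~\cite{KY12}, which for non-complete-multipartite $H$ is typically a variant of \textsc{3-SAT} or of independent set in bounded-degree graphs. For each variable (or vertex of the source instance), a gadget containing a single designated copy of $H$ is introduced. Distinct gadgets would be glued only at vertices playing the role of the ``isolated vertex'' $c$ of the induced $K_1 \cup K_2$, and possibly along the non-edges $ac, bc$; crucially, no edge of any gadget copy of $H$ is ever shared with another gadget. Consequently, distinct gadget copies of $H$ have distinct signature edges $ab$ and are edge-disjoint. Keeping the gluing local, as in~\cite{KY12}, preserves the degree bound of $t + 1$.

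The crux is to establish that the entire family ${\cal K}$ of subgraphs of $G$ isomorphic to $H$ consists solely of the gadget copies, so that their pairwise edge-disjointness transfers to ${\cal K}$. This requires ruling out any spurious embedding of $H$ that spans multiple gadgets. Such an embedding would need to place its edge $ab$ inside one of the gadgets, and the prescribed neighborhoods of $a$ and $b$ in $H$ together with the degree bound $t + 1$ would force the remaining vertices to lie within that same gadget; the non-adjacencies $ac, bc$ are what obstruct any crossing of gadget boundaries. I expect this local verification to be the main obstacle, since it depends on the fine topology of the Kobayashi-Yin gadgets; but given how tightly the degree bound $t+1$ constrains the embedding of the $t$-regular graph $H$, a case analysis based on the induced $K_1 \cup K_2$ should suffice to complete the proof.
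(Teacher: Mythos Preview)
The paper does not supply its own proof of this proposition; it simply records the statement as ``follows from Kobayashi and Yin~\cite{KY12}.'' So there is no argument in the paper to compare against beyond the implicit claim that the hardness construction in~\cite{KY12} already yields instances in which the copies of $H$ are pairwise edge-disjoint.

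Your outline is reasonable in spirit and correctly isolates the one nontrivial point: that \emph{every} subgraph of the reduced graph $G$ isomorphic to $H$ must be one of the designated gadget copies, so that edge-disjointness of the gadget copies transfers to all of $\mathcal{K}$. However, you do not resolve this point---you flag it as ``the main obstacle'' and defer it to a case analysis you have not carried out. That step is exactly the content separating the present proposition from the bare NP-hardness in~\cite{KY12}, so what you have written is a plan rather than a proof. The gap is made sharper by the fact that you are speculating about the Kobayashi--Yin reduction (``typically a variant of \textsc{3-SAT} or of independent set'') rather than examining it; a complete argument would consult~\cite{KY12}, describe their actual gadget, and verify directly that no spurious embedding of $H$ can cross gadget boundaries. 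Your heuristic that the degree bound $t+1$ together with the induced $K_1 \cup K_2$ pins an embedded copy of $H$ to a single gadget is plausible, but it is not automatic (two copies of a $t$-regular $H$ can share an edge in a graph of maximum degree $t+1$), and you have not carried it through.
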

As mentioned above, 
this NP-hardness explains that 
the assumption on the complete partiteness of the forbidden subgraphs is also unavoidable in
Theorems \ref{thm:disjointtmatching} and \ref{thm:degreetmatching}.

Finally, 
for the $K_{t,t}$-free $t$-matching problem in bipartite graphs, 
some polynomial-time algorithms are designed,  extending those for the $C_4$-free $2$-matching problem in bipartite graphs (see Section~\ref{sec:related}).

\subsection{Our Contribution}
\label{sec:ourresults}

We have seen that the polynomial solvability of the $K_{t+1}$-free $t$-matching and $K_{t,t}$-free $t$-matching problems is unkown. 
As well as these problems, 
the polynomial solvability of 
\MKtMP 
in general graphs 
for ${\cal K}$ being 
an arbitrary 
family of $t$-regular complete partite subgraphs is unknown. 
The contribution of this paper is to present polynomial-time algorithms for several special cases of this problem.  

\subsubsection{Overview of Our Results}

Recall our first result, Theorem \ref{thm:disjointtmatching}, 
solving the case where 
${\cal K}$ is an edge-disjoint family of $t$-regular complete partite subgraphs of $G$. 
By setting $t=2$ in Theorem \ref{thm:disjointtmatching}, we immediately obtain the following corollary. 

\begin{corollary}
\label{cor:edgedisjointC3C4}
{\sc Maximum ${\cal K}$-Free $2$-Matching Problem} 
can be solved in polynomial time 
if all the subgraphs in ${\cal K}$ are 
isomorphic to $C_3$ or $C_4$, 
and 
are 
pairwise edge-disjoint. 
\end{corollary}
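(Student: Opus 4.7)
The plan is to derive Corollary~\ref{cor:edgedisjointC3C4} as an immediate specialization of Theorem~\ref{thm:disjointtmatching} to the case $t=2$. The only thing that needs verification is that the hypothesis ``all subgraphs in $\mathcal{K}$ are $2$-regular complete partite and pairwise edge-disjoint'' is exactly what the corollary assumes (up to the pairwise edge-disjoint condition, which is already explicit).

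First I would recall the definition of a complete multipartite graph $K_{n_1,\dots,n_r}$ and note that a vertex in the $i$-th part has degree $n - n_i$, where $n=\sum_j n_j$. For such a graph to be $2$-regular we need $n-n_i=2$ for every $i$, i.e.\ all parts have common size $n_i = n-2$, which forces $n = r(n-2)$. The only integer solutions with $r\ge 2$ are $(r,n)=(2,4)$, giving $K_{2,2}\cong C_4$, and $(r,n)=(3,3)$, giving $K_{1,1,1}\cong C_3$. Hence any graph isomorphic to $C_3$ or $C_4$ is $2$-regular complete partite, so a family $\mathcal{K}$ of $C_3$'s and $C_4$'s that is pairwise edge-disjoint satisfies the hypotheses of Theorem~\ref{thm:disjointtmatching} with $t=2$.

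With this observation in hand, the corollary follows at once: any instance of \MKtMP{} in the setting of Corollary~\ref{cor:edgedisjointC3C4} is an instance covered by Theorem~\ref{thm:disjointtmatching} for $t=2$, so it is solvable in polynomial time by the algorithm provided there. There is essentially no obstacle to overcome; the only mildly nontrivial point is the structural classification of $2$-regular complete multipartite graphs, but this is a one-line degree count, and nothing else in the argument is required.
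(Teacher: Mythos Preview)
Your proposal is correct and follows exactly the paper's approach: the corollary is stated immediately after the sentence ``By setting $t=2$ in Theorem~\ref{thm:disjointtmatching}, we immediately obtain the following corollary,'' and the paper has already noted that $K_{t+1}$ and $K_{t,t}$ (hence $C_3\cong K_3$ and $C_4\cong K_{2,2}$ for $t=2$) are $t$-regular complete partite. Your classification of all $2$-regular complete partite graphs is a small bonus beyond what is strictly needed, but it is correct and harmless.
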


Corollary \ref{cor:edgedisjointC3C4} extends the result by Nam~\cite{Nam94}, 
solving the $C_{4}$-free $2$-matching problem where the cycles of length four are vertex-disjoint. 
Namely, Corollary~\ref{cor:edgedisjointC3C4} 
extends 
vertex-disjointness 
to edge-disjointness, 
and allows ${\cal K}$ to include not only $C_4$ but also $C_3$.

Next, 
recall our second result, Theorem \ref{thm:degreetmatching}, 
which solves the case where
the maximum degree of the input graph is at most $2t-1$. 
Theorem \ref{thm:degreetmatching} expands upon the works of B\'{e}rczi and V\'{e}gh~\cite{BV10} and Kobayashi and Yin~\cite{KY12},  
which focused graphs with maximum degree at most $t+1$. 
That is, Theorem~\ref{thm:degreetmatching} improves the degree bound from $t+1$ to $2t-1$, 
where $2t-1 > t+1$ if $t > 2$.  

We further present some extensions of Theorems \ref{thm:disjointtmatching} and \ref{thm:degreetmatching}. 
Below is one extension of Theorem \ref{thm:disjointtmatching}, 
which will be used in our proof for Theorem \ref{thm:degreetmatching}. 
The pairwise edge-disjointness of the subgraphs in $\mathcal{K}$ 
is relaxed to the following condition: 
\begin{enumerate}[label=(\cond),ref=\cond]
\item \label{cond:star} 
The subgraph family $\mathcal{K}$ is partitioned into subfamilies $\mathcal{K}_1, \dots , \mathcal{K}_\ell$ such that 
    \begin{itemize}
        \item for each subfamily $\mathcal{K}_i$ ($i=1,\ldots, \ell$), the number $\left|\bigcup_{K \in \mathcal{K}_i} V(K)\right|$ of its vertices is bounded by a fixed constant, and  
        \item for distinct subfamilies $\mathcal{K}_i$ and $\mathcal{K}_j$ ($i,j \in \{1, \dots , \ell\})$ and for 
        each pair of subgraphs $K \in \mathcal{K}_i$ and $K' \in \mathcal{K}_j$, 
        it holds that $K$ and $K'$ are edge-disjoint. 
    \end{itemize}
\end{enumerate}
Here ``\cond'' stands for ``Relaxed Disjointness.''

\begin{theorem}
\label{thm:startmatching}
For a fixed positive integer $t$,  
\MKtMP 
can be solved in polynomial time 
if $\K$ is a family of $t$-regular complete partite subgraphs of $G$ satisfying the condition~\eqref{cond:star}. 
\end{theorem}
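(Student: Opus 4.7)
The plan is to reduce \MKtMP under~\eqref{cond:star} to the Boolean Edge-CSP framework that drives \Cref{thm:disjointtmatching}, by treating each subfamily $\mathcal{K}_i$ as a single aggregated local constraint on a constant-size edge set.

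For each subfamily $\mathcal{K}_i$, set $U_i=\bigcup_{K\in\mathcal{K}_i}V(K)$ and $E_i=\bigcup_{K\in\mathcal{K}_i}E(K)$. Since $|U_i|$ is bounded by a fixed constant, so are $|E_i|\le\binom{|U_i|}{2}$ and $|\mathcal{K}_i|\le 2^{|E_i|}$. Consequently, the $\mathcal{K}_i$-free condition on $E_i$ is a Boolean predicate on a bounded number of variables and admits a description of constant size. Moreover, the second clause of~\eqref{cond:star} gives $E_i\cap E_j=\emptyset$ for $i\ne j$, so the aggregated constraints of different subfamilies act on pairwise disjoint variable sets---precisely the structural feature exploited in \Cref{thm:disjointtmatching}, where each individual forbidden subgraph contributed an independent edge-disjoint constraint.

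Building on this, I would construct a Boolean Edge-CSP instance on $G$ whose constraints are (i) the degree-at-most-$t$ constraint at every vertex, and (ii) for each $i$, one aggregated constraint on $E_i$ forbidding any $E(K)$ with $K\in\mathcal{K}_i$ from being fully selected. The algorithm underlying \Cref{thm:disjointtmatching} already handles the degree constraints and a single exclusion constraint per forbidden subgraph; the remaining task is to verify that the aggregated $\mathcal{K}_i$-constraint fits in the same constraint language. Because $|U_i|$ is a constant, this is a finite structural check, independent of the size of $G$, so it only has to be carried out once for each ``shape'' of subfamily that can arise within the constant vertex budget.

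The main obstacle is this last verification: one must show that the intersection over $K\in\mathcal{K}_i$ of the per-$K$ exclusion constraints still has the jump-system-type property required by the Boolean Edge-CSP algorithm. Jump systems are not in general closed under intersection, so the step should not reduce to a black-box combination of single-$K$ constraints; instead, I expect a direct exchange-axiom argument, using the symmetry of $t$-regular complete partite subgraphs and the constant-size regime to enumerate the relevant cases (and, if needed, to split $\mathcal{K}_i$ into boundedly many pieces each of which is individually tractable). Once this local property is established, applying the Boolean Edge-CSP algorithm of \Cref{thm:disjointtmatching} to the aggregated instance produces a maximum $\mathcal{K}$-free $t$-matching in polynomial time.
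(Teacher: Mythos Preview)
Your overall architecture matches the paper: aggregate each subfamily $\mathcal{K}_i$ into a single local constraint on the constant-size graph $H_i=(U_i,E_i)$, use the edge-disjointness of the $H_i$'s to assemble a \BECG instance exactly as in the proof of \Cref{thm:disjointtmatching}, and then optimize by reducing to the $b$-factor version (this last reduction, via \Cref{lem:factor2matching}, is implicit in ``the algorithm underlying \Cref{thm:disjointtmatching}'' but should be made explicit, since \BECG is a feasibility problem and the degree constraint you want is an equality $|\delta_M(v)|=b(v)$, not $\le t$).

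The gap is in what you call the ``main obstacle.'' You frame the aggregated constraint as an \emph{intersection} of the per-$K$ exclusion constraints and then worry that jump systems are not closed under intersection, proposing a case-by-case exchange-axiom verification. This is unnecessary: \Cref{thm:parititejump} is stated for an \emph{arbitrary} family $\mathcal{K}$ of $t$-regular complete partite subgraphs, not just for a singleton $\{K\}$. Applying it with ambient graph $H_i$ and family $\mathcal{K}_i$ yields directly that
\[
D_{H_i}\;=\;\bigl\{\,d_F\in\mathbb{Z}_+^{U_i}:\ F\ \text{is a $\mathcal{K}_i$-free $b$-matching in $H_i$}\,\bigr\}
\]
is a constant-parity jump system. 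The constraint you actually place at the new vertex $r_i$ is the splitting of $D_{H_i}$ to the parallel-edge bundles, intersected with the $\{0,1\}$ box---operations that preserve the constant-parity jump system property (\Cref{sec:jumpsystem}). So no new exchange argument and no enumeration of ``shapes'' is needed; the same black-box lemma that handled a single $K$ in \Cref{thm:disjointbfactor} already handles all of $\mathcal{K}_i$ at once. Once you see this, the proof is a two-line modification of the edge-disjoint case.
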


Other results include extensions from $t$-matchings to $b$-matchings (Theorems \ref{thm:disjointbfactor},  \ref{thm:disjointbmatching}, 
\ref{thm:starbmatching}, 
\ref{thm:degreebfactor}, 
and 
\ref{thm:degreebmatching}). 
For a vector $b \in \Z^V$, 
a \emph{$b$-matching} is an edge subset $M\subseteq E$ 
such that 
each vertex $v\in V$ is incident to at most $b(v)$ edges in $M$.
Namely, we can deal with inhomogeneous degree constraints.
Moreover, we provide 
an extension from forbidding subgraphs to
forbidding degree sequences (Theorem \ref{thm:generalization}).
In particular, 
the latter extension offers some new results on restricted $2$-matchings (Examples \ref{EX01} and \ref{EX02}).

\subsubsection{Technical Ingredients}
\label{subsubsec:tech}

Technically, 
our algorithms are established by exploiting two important previous results, 
one is on 
the discrete structure of $\mathcal{K}$-free $t$-matchings
and 
the other 
is on
the constraint satisfaction problem (CSP)\@. 
This is in contrast to the fact that 
the previous algorithms  \cite{BV10,KY12,Nam94} are 
based on graph-theoretical methods.

The first result  
is outlined as follows. 
Let $b \in \Z^V$ with $b(v)\le t$ for each $v\in V$
and 
let $J \subseteq \Z^V$ be the set of the degree sequences of all $\K$-free $b$-matching in $G$. 
Kobayashi, Szab\'o, and Takazawa \cite{KST12} proved that 
$J$ forms a \emph{constant-parity jump system} if
all the subgraphs in $\K$ are $t$-regular complete partite  
(see \Cref{thm:parititejump} below).
Here a 
constant-parity 
jump system is a subset of $\Z^V$, 
which offer a discrete structure 
generalizing matroids; 
see \Cref{sec:jumpsystem} for the definition. 

The second is on the polynomial-time solvability of a class of the CSP\@. 
The \emph{Boolean edge-CSP} is the problem of finding an edge subset $M \subseteq E$ of a given graph $G=(V,E)$
such that 
the set of edges in $M$ incident to each vertex $v\in V$ 
satisfies a certain constraint 
associated with $v$; see \Cref{sec:BooleanEdgeCSP} for 
formal description. 
While the Boolean edge-CSP is NP-hard in general,
Kazda, Kolmogorov, and Rol{\'{\i}}nek~\cite{KKR19} showed that this problem can be solved in polynomial time if the constraint associated with $v$ is described by a constant-parity jump system for each $v\in V$
(see \Cref{thm:EdgeCSP} below).

The most distinctive part of this paper is a reduction of
\MKtMP 
to the Boolean edge-CSP\@. 
It appears in the proof of Theorem \ref{thm:disjointbfactor} below, 
which deals with the problem of finding a $\K$-free $b$-factor, 
i.e.,\ a $t$-matching with specified degree sequence $b \in \Z^V$. 
Here, 
on the basis of the relationship between $\K$-free $b$-matchings and jump systems 
(\Cref{thm:parititejump}), 
we construct 
a polynomial reduction 
of the problem of finding a $\K$-free $b$-factor 
to the Boolean edge-CSP with  constant-parity jump system constraints. 

Theorem \ref{thm:disjointtmatching} is then derived from Theorem \ref{thm:disjointbfactor}. 
In order to prove Theorem \ref{thm:disjointtmatching}, 
we iteratively solve subproblems of finding a $\K$-free $b$-factor.
We remark that constant-parity jump systems play a key role here, 
as well as the reduction mentioned above. 
The fact that $J$ is a constant-parity jump system guarantees that
the number of the iterations is polynomially bounded by the input size (see Lemma \ref{lem:factor2matching} below).

Theorem \ref{thm:startmatching} is proved in the same manner. 
We then derive Theorem \ref{thm:degreetmatching} from 
Theorem \ref{thm:startmatching} 
by constructing 
a subfamily $\K' \subseteq \K$ such that
$\K'$ satisfies \eqref{cond:star}, 
a $\K'$-free $t$-matching exists in $G$ 
if and only if 
a $\K$-free $t$-matching exists in $G$, and
we can construct a $\K$-free $t$-matching from a $\K'$-free $t$-matching
in polynomial time.

\subsection{Further Related Work}
\label{sec:related}

The $C_4$-free $2$-matching problem has been actively studied 
in the setting when the input graph is restricted to be bipartite. 
Hartvigsen~\cite{Hart06}, Kir\'{a}ly~\cite{Kir99,Kir09}, and Frank~\cite{Fra03} gave
min-max theorems for the $C_4$-free $2$-matching problem in bipartite graphs, and 
more generally for the $K_{t,t}$-free $t$-matching problem in bipartite graphs, 
which implies the polynomial solvability of the problems. 
To the best of our knowledge, 
Frank \cite{Fra03} 
is the first work 
to generalize the restricted $2$-matching in this context to $t$-matchings. 
For the $C_4$-free $2$-matching problem in bipartite graphs, 
Hartvigsen~\cite{Hart06} and Pap~\cite{Pap07} designed combinatorial polynomial-time algorithms, 
Babenko~\cite{Bab12} improved the running time, and 
Takazawa~\cite{Tak17DAM} showed a decomposition theorem. 
Takazawa~\cite{Tak17DO,Tak22} extended these results to more generalized classes of 
\MKtMP.

The weighted variant of 
\MKtMP 
has also attracted much attention. 
In the weighted problem, an input consists of a graph, a family ${\cal K}$ of subgraphs, and a non-negative weight function on the edge set, and 
the objective is to find a ${\cal K}$-free $t$-matching with maximum total weight. 
It is shown by Kir\'{a}ly (see \cite{Fra03}) and by B\'{e}rczi and Kobayashi \cite{BK12} that 
the weighted $C_{\le 4}$-free $2$-matching problem
is NP-hard even if the input graph is restricted to be cubic, bipartite, and planar. 
For the weighted $C_{4}$-free $2$-matching problem in bipartite graphs, and 
more generally for the weighted $K_{t,t}$-free $t$-matching problem in bipartite graphs,  
under the assumption that the weight function satisfies a certain property, 
Makai~\cite{Mak07} gave a polyhedral description,  
Takazawa~\cite{Tak09} designed a combinatorial polynomial-time algorithm, 
and Paluch and Wasylkiewicz~\cite{PW21ESA} presented a faster and simpler algorithm. 

It is still open whether the weighted $C_{3}$-free $2$-matching problem can be solved in polynomial time. 
For the weighted $C_{3}$-free $2$-matching problem in subcubic graphs, 
Hartvigsen and Li~\cite{HL13} gave a polyhedral description and a polynomial-time algorithm, 
and faster polynomial-time algorithms were presented by Kobayashi~\cite{Kob10} and by Paluch and Wasylkiewicz~\cite{PW21IPL}. 
Recently, 
Kobayashi~\cite{Kob22} designed a polynomial-time algorithm for the 
weighted $C_{3}$-free $2$-matching problem in which the cycles of length three are edge-disjoint. 

The relationship between ${\cal K}$-free $t$-matchings and jump systems has been studied in~\cite{BK12,Cun02,KST12}, 
some of which will be used in this paper.  
More generally, the relationship between weighted ${\cal K}$-free $t$-matchings and discrete convexity 
has been studied in~\cite{BK12,Kob10,Kob14,KST12}.

\subsection{Organization}

The rest of the paper is organized as follows. 
In Section \ref{sec:pre}, 
we present the basic definitions and results in a formal manner. 
In Section \ref{sec:edgedisjoint}, 
we solve the problem under the assumption that the subgraphs in $\K$ are pairwise edge-disjoint, 
and then 
under the relaxed condition (\ref{cond:star}). 
Section \ref{sec:boundeddegree} is devoted to a solution to the graphs with maximum degree at most $2t-1$. 
Finally, 
in Section \ref{sec:generalization}, 
we deal with a more generalized problem where the forbidden structure is described in terms of degree sequences.

\section{Preliminaries}
\label{sec:pre}

Let $\mathbb{Z}_+$ denote the set of nonnegative integers, 
and ${\bf 0}$ (resp.~${\bf 1}$) denote the all-zero (resp.~all-one) vector of appropriate dimension. 
For a finite set $V$, its subset $U \subseteq V$, and 
a vector $x \in \Z^V$, let $x(U) = \sum_{v \in U} x(u)$.

\subsection{Basic Definitions on Graphs}
\label{sec:notation}

Throughout this paper, we assume that graphs have no self-loops to simplify the description, while they may have parallel edges. 
Let $G=(V, E)$ be a graph. 
For a subgraph $H$ of $G$, let $V(H)$ and $E(H)$ denote the vertex set and edge set of $H$, respectively. 
For a vertex set $X \subseteq V$, let $G[X]$ denote the subgraph induced by $X$.

Let $F \subseteq E$ be an edge subset and let $v \in V$ be a vertex.  
The set of edges in $F$ incident to $v$ is denoted by $\delta_F(v)$. 
If $F = E(H)$ for some subgraph $H$ of $G$, 
then 
$\delta_{E(H)}(v)$ is often abbreviated as $\delta_H(v)$. 
When no confusion arises, 
$\delta_G(v)$ is further abbreviated as $\delta(v)$. 
The number of edges incident to $v$, 
i.e.,\ $|\delta(v)|$, 
is referred to as the \emph{degree} of $v$. 
The \emph{degree sequence} $d_F$ of $F \subseteq E$ is a vector in $\mathbb{Z}_+^V$ defined by 
$d_F(u) = |\delta_F(u)|$ 
for each $u\in V$.

For a positive integer $t$, a graph is called \emph{$t$-regular} if every vertex has degree $t$. 
A graph $G=(V, E)$ is said to be a \emph{complete partite graph} 
if there exists a partition $\{V_1, \dots , V_p\}$ of $V$
such that $E = \{ uv \colon u \in V_i, v \in V_j, i\neq j\}$ for some positive integer $p$. 
In other words, a complete partite graph is the complement of the disjoint union of complete graphs.
Each $V_i$ is called a \emph{color class} of $G$.

As defined in Section \ref{sec:intro}, 
for a positive integer $t$, an edge set $M \subseteq E$ is called a \emph{$t$-matching}  
if $d_M(v) \le t$ for every $v \in V$. 
In particular, 
if $d_M(v) = t$ holds for every $v \in V$, 
then $M$ is called a \emph{$t$-factor}. 
For a vector $b \in \mathbb{Z}_+^V$, an edge set $M \subseteq E$ is called a \emph{$b$-matching} (resp.~\emph{$b$-factor}) 
if $d_M(v) \le b(v)$ (resp.~$d_M(v) = b(v)$) for every $v \in V$.

In what follows, 
instead of 
\MKtMP,
we deal with the following slightly generalized problems.

\paragraph{\underline{\KbFP}}
Given a graph $G=(V, E)$, $b \in \mathbb{Z}_+^V$, and a family $\K$ of subgraphs of $G$, 
find a $\K$-free $b$-factor (if one exists).

\paragraph{\underline{\MKbMP}}
Given a graph $G=(V, E)$, $b \in \mathbb{Z}_+^V$, and a family ${\cal K}$ of subgraphs of $G$, 
find a $\K$-free $b$-matching with maximum cardinality.

\medskip

Note that 
\MKtMP 
is a special case of 
\MKbMP,
where $b(v) = t$ for each $v \in V$.

\begin{remark}
In this paper, we only consider the case where ${\cal K}$ consists of subgraphs of size bounded by a fixed constant (e.g., $t$-regular complete partite subgraphs for a fixed integer $t$). 
In such a case, since $|{\cal K}|$ is polynomially bounded by the size of the input graph, 
the representation of ${\cal K}$ does not affect the polynomial solvability of the problem.  
Therefore, in what follows, we suppose that ${\cal K}$ is explicitly given as the list of its elements. 
\end{remark}

\begin{remark}\label{rmk:connected}
Let $G = (V, E)$ be a graph, $b \in \Z_+^V$ with $b(v) \leq t$ for each $v \in V$, and $K$ a connected $t$-regular subgraph of $G$.
We can easily observe that, if a $b$-matching $M \subseteq E$ of $G$ contains $K$,
then $K$ forms a connected component of the induced subgraph $(V, M)$ of $G$ by $M$.
\end{remark}

\subsection{Jump System}
\label{sec:jumpsystem}

Let $V$ be a finite set. 
For a subset $U \subseteq V$, 
let $\chi_U \in \{0,1 \}^V$ denote the characteristic vector of $U$, that is, $\chi_U(v) = 1$ for $v \in U$ and
$\chi_U(v)=0$ for $ v \in V \setminus U$. 
If  $U = \{u\}$ for an element $u\in V$, then 
$\chi_{\{u\}}$ is simply denoted by $\chi_u$. 

For two vectors $x, y \in \mathbb{Z}^V$, 
a vector $s \in \mathbb{Z}^V$ is called an \emph{$(x, y)$-increment} if 
$s = \chi_u$ 
and 
$x(u) < y(u)$ 
for some $u \in V$, or 
$s = -\chi_u$ 
and 
$x(u) > y(u)$ 
for some $u \in V$.
A nonempty set $J \subseteq \Z^V$ is said to be a \emph{jump system} if it satisfies the following exchange axiom (see~\cite{BC95}): 
\begin{quote}
    For any $x, y \in J$ and for any $(x, y)$-increment $s$ with $x+s \not\in J$, 
    there exists an $(x+s, y)$-increment $t$ such that $x+ s +t \in J$.
\end{quote}
In particular, 
a jump system $J \subseteq \mathbb{Z}^V$ is called a \emph{constant-parity jump system}
if $x(V) - y(V)$ is even for any $x, y \in J$. 

Constant-parity jump systems include several discrete structures as special classes. 
First, 
for a matroid with a basis family ${\cal B}$, 
it follows from 
the exchange property of matroid bases 
that $\{\chi_B \colon B \in {\cal B}\}$ is a constant-parity jump system. 
Second, the characteristic vectors of all the feasible sets of an even delta-matroid form a constant-parity jump system (see~\cite{BC95}). 
Finally, 
for a graph $G=(V, E)$, 
the set 
$\{d_F \colon F \subseteq E\}$ 
of the degree sequences of all the edge subsets is also a constant-parity jump system. 
See~\cite{BC95,Lov97,Mur06} for details on jump systems.

The following theorem shows a relationship between ${\cal K}$-free $b$-matchings and jump systems.

\begin{theorem}[{follows from \cite[Proposition 3.1]{KST12}}]
\label{thm:parititejump}
Let $G=(V, E)$ be a graph, let $t$ be a positive integer, and 
let $b \in \mathbb{Z}_+^V$ be a vector such that $b(v) \le t$ for each $v \in V$. 
For a family ${\cal K}$ of complete partite $t$-regular subgraphs in $G$, 
the degree sequences of all ${\cal K}$-free $b$-matchings in $G$ form a constant-parity jump system.  
\end{theorem}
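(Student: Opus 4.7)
The plan is to reduce the statement to the special case $b \equiv t$ (the $t$-matching case), which is \cite[Proposition 3.1]{KST12}, and then to invoke the closure of constant-parity jump systems under intersection with a coordinate box.

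Let $J_t$ denote the set of degree sequences of $\mathcal{K}$-free $t$-matchings in $G$, and let $J_b$ denote the corresponding set for $\mathcal{K}$-free $b$-matchings. Since $b(v) \le t$ for every $v \in V$, an edge subset $M$ is a $\mathcal{K}$-free $b$-matching if and only if it is a $\mathcal{K}$-free $t$-matching satisfying the pointwise bound $d_M(v) \le b(v)$. Consequently,
\[
J_b \;=\; J_t \cap B, \qquad B := \{\, z \in \mathbb{Z}^V : 0 \le z(v) \le b(v) \text{ for all } v \in V \,\}.
\]

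First I would invoke \cite[Proposition 3.1]{KST12}, which asserts that $J_t$ is a constant-parity jump system. Then I would verify that $J_t \cap B$ inherits this property. Constant parity is immediate from $J_b \subseteq J_t$. For the exchange axiom, take $x, y \in J_b$ and an $(x,y)$-increment $s$ with $x + s \notin J_b$. A brief case check shows $x + s \in B$ automatically: if $s = \chi_u$ with $x(u) < y(u)$, then by integrality $(x+s)(u) = x(u)+1 \le y(u) \le b(u)$; the symmetric argument handles $s = -\chi_u$; all other coordinates of $x$ are unchanged and already lie in $B$. Hence $x + s \notin J_t$, so the exchange axiom for $J_t$ supplies an $(x+s, y)$-increment $t$ with $x + s + t \in J_t$, and the same coordinate-wise argument applied to the pair $(x+s, y)$ gives $x + s + t \in B$, yielding $x + s + t \in J_b$.

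The main combinatorial content is packaged inside \cite[Proposition 3.1]{KST12}, whose proof presumably exploits the complete partite, $t$-regular structure (cf.\ Remark \ref{rmk:connected}: any forbidden $K$ fully contained in a $b$-matching must be a connected component) to control how a subgraph in $\mathcal{K}$ can be created or destroyed by local edge swaps between two $\mathcal{K}$-free matchings. This is the step where the complete partite assumption is essential; once that result is granted, the reduction to general $b$ with $b(v) \le t$ is purely a formal box-intersection argument, and I anticipate no serious obstacle beyond the verification sketched above.
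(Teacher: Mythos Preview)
Your argument is correct, and it takes a somewhat different route from the paper. The paper gives no proof at all; it merely remarks (immediately after the theorem) that the proof of \cite[Proposition 3.1]{KST12} carries over verbatim to the present, slightly more general setting. That setting generalizes the cited result in two directions simultaneously: from $b \equiv t$ to arbitrary $b$ with $b(v) \le t$, and from $\mathcal{K}$ being the family of \emph{all} subgraphs isomorphic to a fixed list of model graphs to $\mathcal{K}$ being an arbitrary family of $t$-regular complete partite subgraphs. You handle the first of these two extensions by a clean box-intersection reduction (a closure property the paper itself records in Section~\ref{sec:jumpsystem}), rather than by re-running the KST12 argument with $b$ in place of $t$. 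This is a legitimate and arguably tidier alternative.

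One caveat worth flagging: your invocation of \cite[Proposition 3.1]{KST12} for $J_t$ already presupposes the \emph{second} extension, namely that the cited result applies to an arbitrary family $\mathcal{K}$ rather than only to the ``all isomorphic copies'' family. As the paper's remark makes explicit, the original statement in \cite{KST12} does not literally cover this case. So your box-intersection argument dispatches only one of the two generalizations; the other is silently absorbed into the black-box citation. This is harmless in the end---the KST12 proof uses only the local structure of each individual forbidden $K$, not that every isomorphic copy is forbidden, which is exactly why the paper is content to say ``the same proof works''---but strictly speaking you are relying on the same observation the paper makes, just for one extension rather than two.
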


\begin{remark} 
Theorem \ref{thm:parititejump} is a modest extension of 
the original statement \cite[Proposition 3.1]{KST12}, in which 
 $b(v) = t$ for each $v \in V$ and  
 ${\cal K}$ is the set of all subgraphs in $G$ that are isomorphic to a graph in a given list of complete partite $t$-regular subgraphs. 
The same proof, however, works for \Cref{thm:parititejump} as well. 
\end{remark}

We here describe a few basic operations on jump systems, which will be used in the subsequent sections. 

\paragraph{Intersection with a box.}
A \emph{box} is a set of the form $\{x \in \mathbb{R}^V \colon \underline{b} \le x \le \overline{b}\}$ 
for some vectors $\underline{b} \in (\mathbb{R} \cup \{- \infty\})^V$ and $\overline{b} \in (\mathbb{R} \cup \{+ \infty\})^V$. 
If $J\subseteq \Z^V$ is a constant-parity jump system, 
then 
the intersection 
$$J \cap \{x \in \mathbb{R}^V \colon \underline{b} \le x \le \overline{b}\}$$
of $J$ and a box is 
also a constant-parity jump system unless it is empty. 

\paragraph{Minkowski sum.}
For two sets $J_1, J_2 \subseteq \mathbb{Z}^V$, 
their \emph{Minkowski sum} $J_1 + J_2$ is a subset of $\mathbb{Z}^V$ defined by 
$$J_1 + J_2 = \{ x + y \colon x \in J_1,\ y \in J_2\}.$$
It was shown by Bouchet and Cunningham~\cite{BC95} that 
the Minkowski sum of two constant-parity jump systems is also a constant-parity jump system. 

\paragraph{Splitting.}
Let $\{ U_v \colon v \in V\}$ be 
a family of nonempty disjoint finite sets indexed by $v \in V$, 
and 
let $U = \bigcup_{v \in V} U_v$. 
For a set $J \subseteq \mathbb{Z}^V$, we define the \emph{splitting} of $J$ to $U$ as 
$$J' = \{x' \in \mathbb{Z}^U \colon  \mbox{$x'(U_v) = x(v)$  for each $v \in V$ for some $x \in J$}\}.$$
The splitting of a constant-parity jump system is also a constant-parity jump system; see~\cite{Mur21,KMT07}.

If the degree sequences 
of all the $\K$-free $b$-matchings form a constant-parity jump system, 
then 
\MKbMP 
reduces 
to 
\KbFP
which is formally stated as follows.

\begin{lemma}
\label{lem:factor2matching}
Let $G = (V, E)$ be a graph,   
${\cal K}$ be a family of subgraphs of $G$, 
and let $b \in \mathbb{Z}^V_+$. 
If the degree sequences of all the ${\cal K}$-free $b$-matchings in $G$ form a constant-parity jump system, 
then 
a ${\cal K}$-free $b$-matching in $G$ with maximum cardinality can be computed by 
testing the existence of a ${\cal K}$-free $b'$-factor in $G$ for polynomially many vectors $b' \in \mathbb{Z}^V_+$ with $b' \le b$. 
\end{lemma}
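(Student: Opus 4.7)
The plan is to reduce \MKbMP\ to polynomially many instances of \KbFP\ by iteratively augmenting the degree sequence of the current $\mathcal{K}$-free $b$-matching using a ``two-step jump'' whose endpoint $b'$ is tested by a single $\mathcal{K}$-free $b'$-factor query. Let $J \subseteq \mathbb{Z}_+^V$ denote the set of degree sequences of all $\mathcal{K}$-free $b$-matchings in $G$; by hypothesis $J$ is a constant-parity jump system, and it is non-empty since $\mathbf{0} \in J$ (the empty edge set is a $\mathcal{K}$-free $b$-matching). Moreover, $x \leq b$ for every $x \in J$, and the cardinality of an edge set $M$ equals $d_M(V)/2$, so the goal is to find a maximizer of $x(V)$ over $J$.

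\textbf{Algorithm.} Starting from $x_0 = \mathbf{0}$ with the corresponding empty matching $M_0$, iterate as follows: at step $k$, for every pair $(u,v) \in V \times V$ (including $u = v$) with $b' := x_k + \chi_u + \chi_v \leq b$, query the $\mathcal{K}$-free $b'$-factor subroutine; if some query returns a $b'$-factor $M'$, set $x_{k+1} := b'$, $M_{k+1} := M'$, and continue; otherwise, return $M_k$. Each successful iteration increases $x_k(V)$ by $2$, so the procedure terminates after at most $b(V)/2$ iterations, using $O(|V|^2)$ oracle calls per iteration; the total number of $\mathcal{K}$-free $b'$-factor queries is therefore polynomial in the input size.

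\textbf{Key lemma and main obstacle.} The crux of the proof is to show that the algorithm halts only at a maximum, i.e., \emph{for any $x \in J$ with $x(V) < \max_{y \in J} y(V)$, there exist $u, v \in V$ (possibly $u=v$) with $x + \chi_u + \chi_v \in J$}. I would establish this via the jump system exchange axiom in two steps. First, pick $y^{*} \in J$ attaining $\max_{y \in J} y(V)$ and, subject to that, minimizing $\|y^{*} - x\|_1$, and show that $x \leq y^{*}$ coordinate-wise: otherwise, choose $u$ with $y^{*}(u) < x(u)$ and consider the $(y^{*}, x)$-increment $s = +\chi_u$; by the constant-parity property $y^{*} + s \notin J$, so the exchange axiom yields $t$ with $y^{*} + s + t \in J$, the subcase $t = +\chi_{u'}$ contradicts the maximality of $y^{*}(V)$, leaving $t = -\chi_{u'}$ for some $u' \neq u$ with $y^{*}(u') > x(u')$, and a direct computation gives $\|y^{*} + \chi_u - \chi_{u'} - x\|_1 = \|y^{*} - x\|_1 - 2$, contradicting minimality. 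Second, with $x \leq y^{*}$ in hand, pick $u$ with $y^{*}(u) > x(u)$; the $(x, y^{*})$-increment $s = +\chi_u$ again satisfies $x + s \notin J$, so the exchange axiom produces $t$ with $x + s + t \in J$, and now the subcase $t = -\chi_{u'}$ is excluded (at $u' = u$ by the choice of $u$, at $u' \neq u$ by $x \leq y^{*}$), forcing $t = +\chi_{u'}$ and hence $x + \chi_u + \chi_{u'} \in J$, as required. The subtlety lies in the two-level selection of $y^{*}$: maximality alone is not enough, and minimizing $\|y^{*} - x\|_1$ is what enables the first exchange argument to upgrade to the coordinate-wise inequality $x \leq y^{*}$ that drives the second.
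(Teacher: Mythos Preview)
Your proof is correct. The paper's own proof is a two-line black-box argument: it observes that $\mathbf{0}\in J$ and then invokes known results \cite{ST07,BC95,AFN95} stating that a linear function can be maximized over a constant-parity jump system using polynomially many membership-oracle calls, identifying the membership oracle with the $\mathcal{K}$-free $b'$-factor existence test. You instead give a self-contained greedy argument: starting from $\mathbf{0}$, repeatedly jump by $+\chi_u+\chi_v$, and prove via a two-stage exchange argument (first upgrading to $x\le y^{*}$ by choosing $y^{*}$ to minimize $\|y^{*}-x\|_1$ among maximizers, then applying the axiom from $x$ toward $y^{*}$) that such a jump is always available unless $x$ is already optimal. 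Your route avoids external references and makes the algorithm completely explicit, at the cost of being tailored to the objective $x\mapsto x(V)$; the paper's citation covers arbitrary linear objectives but hides the mechanism. One minor remark: your iteration bound $b(V)/2$ is fine in the paper's setting (where $b(v)\le t$), but for the lemma as stated you may prefer the input-independent bound $|E|$, since every $x\in J$ satisfies $x(V)\le 2|E|$ regardless of $b$.
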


\begin{proof}
Denote by $J \subseteq \Z^V$ the constant-parity jump system consisting of the degree sequences of all the ${\cal K}$-free $b$-matchings in $G$. 
Given an initial vector in $J$, 
we can maximize a given linear function over $J$ 
by using the membership oracle of $J$
at most polynomially many times  \cite{ST07,BC95,AFN95}. 
Here, the \emph{membership oracle of $J$} is an oracle that answers whether a given vector is in $J$ or not. 

Since an empty edge set is a ${\cal K}$-free $b$-matching, it holds that ${\bf 0} \in J$. 
That is, we can take ${\bf 0}$ as the initial vector in $J$. 
Now the lemma follows because 
accessing the membership oracle of $J$ corresponds to 
testing the existence of a ${\cal K}$-free $b'$-factor in $G$. 
\end{proof}

\subsection{Boolean Edge-CSP}
\label{sec:BooleanEdgeCSP}

The \emph{constraint satisfaction problem} (\emph{CSP}) 
is a fundamental topic in theoretical computer science 
and has been intensively studied in various fields (see, e.g.,~\cite{RBW06}).
In this paper, 
we focus on  the \emph{Boolean edge-CSP}, 
which is 
formulated as follows.

An instance of the Boolean CSP is a pair $(E,\mathcal{C})$, 
where $E$ is the set of Boolean variables and 
$\mathcal{C}$ is that of constraints. 
A constraint $C \in \mathcal{C}$ is a pair $(\sigma_C, R_C)$, 
where the \emph{scope} $\sigma_C \subseteq E$ is the set of the variables appearing in $C$
and 
the \emph{relation} $R_C$ is 
a subset of $\{0,1\}^{\sigma_C}$. 
In general a scope can be a multi-subset of $E$, 
but for notational simplicity we define a scope as a subset of $E$.
The objective 
of the Boolean CSP is to find a mapping $f\colon E \to \{0,1\}$ 
such that
$(f(e))_{e \in \sigma_C} \in R_C$
for each constraint $C\in \mathcal{C}$. 

A central topic of the CSP is the classification of the computational complexity according to the 
relations that can appear in the constraints. 
Let $\Gamma$ denote a set of relations, 
which is referred to as a \emph{language}. 
The problem of finding a solution to a Boolean CSP instance in which every relation appearing in the constraints belongs to $\Gamma$ 
is denoted by \BC$(\Gamma)$.

Schaefer~\cite{Sch78} established a dichotomy theorem stating that 
\BC$(\Gamma)$ is in class P if 
the 
language $\Gamma$ satisfies one of certain six conditions,
and is NP-hard otherwise.
Bulatov~\cite{Bul17} and Zhuk~\cite{Zhu20} independently established 
its generalization, i.e.,
a 
dichotomy theorem for the CSP over any finite domain,
which affirmatively settled a long-standing open question
posed by Feder and Vardi~\cite{FV98}.

By imposing a structural restriction on the set $\mathcal{C}$ of constraints,
we may obtain  
another class of the Boolean CSP which can be solved in polynomial time.
An example is the \emph{Boolean edge-CSP}, 
a class of the Boolean CSP 
in which each variable appears in exactly two constraints. 
An instance $(E,\mathcal{C})$ of the Boolean edge-CSP over the language $\Gamma$, denoted by \BEC($\Gamma$), is 
interpreted in terms of a graph $G=(V,E)$ in the following way. 

The variable set $E$ coincides with the edge set of the graph $G$. 
A mapping $f\colon E\to \{0,1\}$ corresponds to a subset $M\subseteq E$ of edges 
determined by 
$\chi_M(e) = f(e)$ for each $e\in E$. 
One constraint $C=(\sigma_C, R_C) \in \mathcal{C}$ is described by one vertex $v\in V$ 
and the set $\delta(v)$ of its incident edges: 
the scope 
$\sigma_C \subseteq E$ is the edge set $\delta(v)\subseteq E$; 
and 
the relation 
$R_C \subseteq \{0,1\}^{\delta(v)}$ is described by an edge subset family $\mathcal{F}_v \subseteq 2^{\delta(v)}$ 
by $R_C=\{ \chi_F  \colon F \in \F_v \}$. 
Observe that each variable (edge) appears exactly in two constraints (vertices).

\paragraph{\underline{\BEC{\rm $(\Gamma)$}}}
Given a graph $G=(V, E)$ and
  an edge subset family $\F_v \subseteq 2^{\delta(v)}$  
  whose corresponding relation $\{ \chi_F \colon F \in \F_v \}$ belongs to $\Gamma$ 
  for each vertex $v\in V$, 
find an edge set $M \subseteq E$ such that $\delta_M(v) \in {\cal F}_v$ for each $v \in V$ (if one exists). 

\medskip

We remark that the 
relation $\F_v \subseteq 2^{\delta(v)}$   ($v \in V$) is not given by the membership oracles but by the list of the edge subsets, 
and hence the input size is $O(|V| + |E| + \sum_{v \in V} |{\cal F}_v| )$.

For a language $\Gamma$, 
if 
\BC($\Gamma$)
belongs to class P, 
then so is \BEC$(\Gamma)$. 
We thus focus on Boolean languages $\Gamma$ such that \BC$(\Gamma)$ is NP-hard.
Feder~\cite{Fed01} showed that if $\Gamma$ contains the unary relations $\{(0)\}$ and $\{(1)\}$ and a relation that is not a delta-matroid,
then \BEC$(\Gamma)$ is NP-hard. 
On the other hand,
Kazda, Kolmogorov, and Rol{\'{\i}}nek~\cite{KKR19}
proved that 
\BEC($\Gamma$) belongs to class P if 
every relation is an even delta-matroid. 
Since an even delta-matroid can be identified with a constant-parity jump system, with each coordinate in $\{0, 1\}$, 
in what follows in this paper, 
we refer to an even delta-matroid as a constant-parity jump system 
for the unity of terminology.
Let $\Gamma_{{\rm cp}\text{-}{\rm jump}}$ denote the set of all constant-parity jump systems over the Boolean domain. 
\begin{theorem}[Kazda, Kolmogorov, and Rol{\'{\i}}nek~\cite{KKR19}]
\label{thm:EdgeCSP}
\BECG can be solved in polynomial time.
\end{theorem}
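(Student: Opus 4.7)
My plan is to reduce \BECG to the even delta-matroid intersection problem and then design an augmenting-walk algorithm tailored to the explicit-list input model. First, split each edge $e = uv \in E$ into two ``half-edges'' $e^u, e^v$ placed at its two endpoints, to obtain a new ground set $\widetilde{E}$ of size $2|E|$. At each vertex $v$, the family $\mathcal{F}_v$ gives an even delta-matroid on $\{0,1\}^{\delta(v)}$, and the direct sum $J_1 := \bigoplus_{v \in V} \mathcal{F}_v$ is an even delta-matroid on $\widetilde{E}$. The edge-identification condition $\chi(e^u)=\chi(e^v)$ for every $e=uv$ likewise defines an even delta-matroid $J_2$ on $\widetilde{E}$, namely the direct sum of the two-element relations $\{(0,0),(1,1)\}$. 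A feasible solution to the original \BEC{} instance is precisely a common feasible point of $J_1$ and $J_2$.

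I would then attack this intersection problem by an iterative augmentation procedure. Start from the configuration $x = \mathbf{0} \in \{0,1\}^{\widetilde{E}}$, which is feasible for $J_2$ but possibly not for $J_1$. Construct an auxiliary directed graph on node set $\widetilde{E}$ whose arcs encode valid pair-exchanges alternately in $J_1$ and in $J_2$; the constant-parity exchange axiom guarantees that if $x$ is not yet a common feasible point and one exists, then there is a short alternating walk between two ``violated'' coordinates whose symmetric-difference update produces a new configuration closer to simultaneous feasibility. Since $\mathcal{F}_v$ is given by an explicit list at every $v$, the auxiliary graph has polynomially bounded size and can be searched by breadth-first search in polynomial time. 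Bounding a natural potential (such as the number of violated vertices, or the Hamming distance to a closest common feasible point in the Minkowski sum of the two jump systems) shows that polynomially many augmentations suffice.

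The main obstacle will be the weakness of the exchange axiom for even delta-matroids: the ``replacement'' coordinate supplied by the jump system axiom may coincide with the departing coordinate, so the classical matroid-intersection augmenting-path framework breaks down. The standard remedy is to work systematically with pair-exchanges of size two and to establish a min--max duality analogous to Geelen--Iwata--Murota for matroid matching, certifying that a \emph{shortest} augmenting walk, when one exists, can be found in polynomial time and that each augmentation strictly decreases the potential. Verifying that no additional parity-type obstruction arises, and that shortest augmenting walks do produce strict progress at each step, is the technical crux; it would require a careful case analysis of the exchange patterns available within each local jump system $\mathcal{F}_v$ and of how they propagate through the edge-identification relation in $J_2$.
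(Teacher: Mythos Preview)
The paper does not prove this theorem; it is quoted as an external result of Kazda, Kolmogorov, and Rol{\'{\i}}nek~\cite{KKR19} and used as a black box in the subsequent reductions. There is no proof in the paper to compare your attempt against.

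On the merits of your sketch: the half-edge reformulation is correct but tautological. You have restated \BECG as finding a common feasible point of two even delta-matroids $J_1 = \bigoplus_{v} \mathcal{F}_v$ and $J_2$ (the direct sum of the pair relations $\{(0,0),(1,1)\}$), which is just the original problem in different notation; nothing has been gained yet.

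The genuine gap is the augmenting-walk step. You assert that the constant-parity exchange axiom ``guarantees that if $x$ is not yet a common feasible point and one exists, then there is a short alternating walk'' whose symmetric difference makes progress. No such augmenting-path theorem is known for even delta-matroid intersection in general; the axiom only promises a single two-step exchange \emph{within one} delta-matroid, not a global alternating structure threading through both $J_1$ and $J_2$. Your own caveat---that the replacement coordinate supplied by the axiom may coincide with the departing one---is exactly the obstruction that has kept general delta-matroid intersection open. The Geelen--Iwata--Murota work you invoke concerns \emph{linear} (representable) delta-matroids and matroid matching, where Pfaffians and Tutte matrices supply the missing global structure; those algebraic tools are unavailable here, since the $\mathcal{F}_v$ are arbitrary even delta-matroids given as explicit lists. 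In short, the plan defers the entire difficulty to an unproved claim that shortest augmenting walks exist and yield strict progress, and without a concrete mechanism exploiting the very special shape of $J_2$ this is a restatement of the problem rather than a proof.
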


\section{Edge-Disjoint Forbidden Subgraphs}
\label{sec:edgedisjoint}

In this section, we consider the case when ${\cal K}$ is an edge-disjoint family of $t$-regular complete partite subgraphs. 
We first give a polynomial-time algorithm for 
\KbFP
by reducing the problem to \BECG in \Cref{thm:disjointbfactor}.
Then, by using this algorithm as a subroutine, we present a polynomial-time algorithm for 
\MKbMP 
(\Cref{thm:disjointbmatching}), 
which implies \Cref{thm:disjointtmatching}. 
Finally, 
we prove the polynomial solvability under the condition (\ref{cond:star}) 
in \Cref{thm:starbmatching}, 
which will be used in the next section. 

\begin{theorem}
\label{thm:disjointbfactor}
For a fixed positive integer $t$, 
\KbFP
can be solved in polynomial time if 
$b(v) \le t$ for each $v \in V$ and 
all the subgraphs in 
${\cal K}$ are $t$-regular complete partite and 
pairwise edge-disjoint. 
\end{theorem}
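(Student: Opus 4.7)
The plan is to reduce \KbFP, under the edge-disjointness assumption on $\mathcal{K}$, to \BECG, which is polynomial-time solvable by Theorem~\ref{thm:EdgeCSP}. Given $(G,\mathcal{K},b)$, I would build an auxiliary graph $G^{\ast}=(V^{\ast},E^{\ast})$ whose vertex set augments $V$ with one new vertex $w_K$ per $K\in\mathcal{K}$ (together with a constant number of intermediate split vertices per pair $(v,K)$ with $v\in V(K)$) and whose edge set augments $E$ with $O(t)$ parallel auxiliary edges attached to $w_K$ for every $v\in V(K)$. Since each $K$ is $t$-regular complete partite, $|V(K)|\le 2t$, so $G^{\ast}$ has polynomial size.

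At each $v\in V$, the local constraint enforces the $b$-factor condition $|F\cap\delta_G(v)|=b(v)$, a single fixed-sum constraint that is trivially a constant-parity jump system. At each $w_K$, I would apply Theorem~\ref{thm:parititejump} to the graph $K$ with $\mathcal{K}$ replaced by $\{K\}$ and bound $(t,\ldots,t)$, obtaining the constant-parity jump system $\tilde{J}_K\subseteq\mathbb{Z}^{V(K)}$ consisting of degree sequences of edge subsets $F\subsetneq E(K)$; its splitting onto the $t\cdot|V(K)|$ auxiliary Boolean edges incident to $w_K$ is again a constant-parity jump system (by the splitting closure property recalled in Section~\ref{sec:jumpsystem}) and serves as the constraint at $w_K$. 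The intermediate split vertices carry only pairwise cardinality equalities $|F\cap X|=|F\cap Y|$, each a constant-parity jump system on its own, arranged as a short chain that propagates the count $|F\cap\delta_K(v)|$ from $v$ to the auxiliary edges incident to $w_K$. The edge-disjointness of $\mathcal{K}$ guarantees that the gadgets for distinct $K$'s share no edges, so the local constraint at each $v\in V$ decomposes as a Minkowski sum of independent per-$K$ pieces and the $b$-factor piece; Minkowski sum preserves the constant-parity jump system property.

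A solution $M^{\ast}\subseteq E^{\ast}$ of the resulting edge-CSP restricts to a $\mathcal{K}$-free $b$-factor $M=M^{\ast}\cap E$ of $G$, and conversely every $\mathcal{K}$-free $b$-factor extends along the auxiliary gadgets to a valid edge-CSP solution. Applying Theorem~\ref{thm:EdgeCSP} then yields a polynomial-time algorithm. The main obstacle will be designing the linking gadget so that every local constraint is genuinely a constant-parity jump system: a direct three-way cardinality equality among $\delta_K(v)$, its connector copies at $v$, and its auxiliary copies toward $w_K$, placed at a single vertex, fails the jump system exchange axiom, so the linking must be distributed across intermediate vertices using only pairwise equalities (and possibly additional parity-fixing edges to absorb the parity contribution of the free edges $\delta_0(v)$). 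Verifying that each resulting local constraint is a constant-parity jump system, using only the splitting and Minkowski-sum closure properties together with Theorem~\ref{thm:parititejump}, is the technical heart of the argument.
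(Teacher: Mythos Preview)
Your high-level strategy---reduce to \BECG by introducing one vertex $w_K$ per $K\in\mathcal{K}$ whose local constraint encodes, via Theorem~\ref{thm:parititejump} and splitting, the admissible degree sequences inside $K$---is exactly the paper's. The gap is in how you treat the original edges $E(K)$.

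You keep $E(K)\subseteq E^{\ast}$ and propose to ``propagate the count $|F\cap\delta_K(v)|$'' to $w_K$ through intermediate vertices carrying pairwise cardinality equalities. But the edges of $\delta_K(v)$ are incident only to vertices of $V$, so whichever vertex of your gadget first reads this count must see $\delta_K(v)$ \emph{and} simultaneously participate in the $b$-factor bookkeeping on $\delta_G(v)$ (directly, or via a fixed-sum link to the main copy of $v$). The resulting conjunction of a fixed-sum constraint with a cardinality-equality constraint on overlapping ground sets is not a constant-parity jump system. Concretely, with $X=\delta_K(v)$, $Y=\delta_G(v)\setminus X$, auxiliary edges $A$, each of size~$2$, and $b(v)=2$, the relation $\{(x,y,a)\in\{0,1\}^{X\cup Y\cup A}:|x|+|y|=2,\ |x|=|a|\}$ already violates constant parity, and even its even-sum part fails the exchange axiom: from the point with $(|x|,|y|,|a|)=(2,0,2)$ toward the one with $(0,2,0)$, the step $-\chi_{x_1}$ admits no valid second step. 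Splitting $v$ into per-$K$ copies just relocates the obstruction, and your Minkowski-sum remark goes the wrong way---a Minkowski sum of pieces on disjoint supports is a product constraint and therefore cannot link $\delta_K(v)$ to the auxiliary edges at all.

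The paper sidesteps the linking problem with one move you are missing: it \emph{deletes} $\bigcup_{K\in\mathcal{K}}E(K)$ from the graph and replaces $\delta_K(v)$, for each $v\in V(K)$, by $t$ parallel edges to the new vertex $r_K$. The constraint at every $v\in V$ is then simply $|F'|=b(v)$ on all of $\delta_{G'}(v)$, a uniform-matroid basis family, and the constraint at $r_K$ is obtained by applying Theorem~\ref{thm:parititejump} to $K$ with forbidden family $\{K\}$ and splitting onto the parallel edges. No linking gadget is needed because the parallel edges \emph{are} the count; the correspondence between an edge-CSP solution $M'$ and a $\mathcal{K}$-free $b$-factor $M$ in $G$ only uses that every degree sequence admissible at $r_K$ is, by definition, realized by some $N_K\subsetneq E(K)$.
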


\begin{proof}
We prove the theorem by constructing a polynomial reduction to \BECG.
Let $(G,b,\K)$ be
an instance of 
\KbFP,
where 
$G=(V,E)$, $b\in \Z_+^V$, and $\cal K$ 
is a family of subgraphs in $G$.

Recall that an input of the Boolean edge-CSP consists of a graph and a constraint on each vertex.
Our input graph $G' = (V', E')$ of the Boolean edge-CSP is constructed as follows (see also Figure~\ref{fig:reduction}):
\begin{figure}
\centering
\includegraphics[width=10cm]{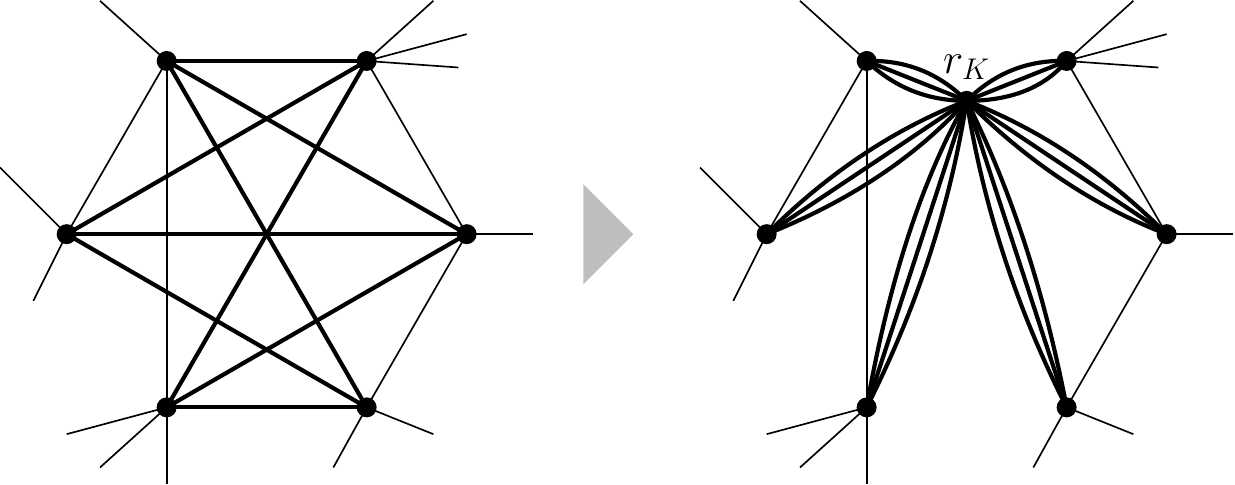}
\caption{
The graph on the left shows 
the edge set $E(K)$ of 
the $t$-regular complete partite graph $K$ by the thick edges, 
while the thin edges belong to $E\setminus E(K)$.
In this example, $K$ is a $3$-regular complete bipartite graph.
The thick edges in the graph on the right depict the newly added three parallel edges between $r_K$ and each vertex $v \in V(K)$.}
\label{fig:reduction}
\end{figure}
\begin{itemize}
    \item Introduce a new vertex $r_K$ for each $K \in \K$, and define the vertex set $V'$ by
    \begin{align*}
        V' = V \cup \{ r_K \colon K \in {\cal K} \}.
    \end{align*}
    \item 
    For each $K \in \K$ and $v \in V(\K)$, 
    introduce new $t$ parallel edges between $r_K$ and $v$, 
    and 
    let $E'_{v,K}$ denote the set of these new $t$ parallel edges.
    Define the edge set $E'$ by
    \begin{align*}
        E'=
        \left(E \cup \bigcup_{K\in \mathcal{K}}\bigcup_{v\in V(K)}E'_{v,K}\right)\setminus \bigcup_{K\in \mathcal{K}}E(K),
    \end{align*}
\end{itemize}

Our input constraint $\F_v \subseteq 2^{\delta_{G'}(v)}$ ($v\in V'$) is constructed as follows:
\begin{itemize}
    \item 
    For each subgraph $K \in \mathcal{K}$, 
    compute a set $D_K \subseteq \mathbb{Z}_+^{V(K)}$
of the degree sequences in the $K$-free $b$-matchings in $K$, i.e.,\ 
\begin{align*}
D_K &= \left\{d_F \in \mathbb{Z}_+^{V(K)} \colon \mbox{$F$ is a $K$-free $b$-matching in $K$} \right\} \\
&= \left\{d_F \in \mathbb{Z}_+^{V(K)} \colon \mbox{$F$ is a $b$-matching in $K$} \right\} \setminus \{ (t, \dots , t)\}.
\end{align*}
Then, for each vertex $v \in V'$, 
define 
${\cal F}_v \subseteq 2^{\delta_{G'}(v)}$ by  
\begin{align}
\label{eq:defnFv}
{\cal F}_v = 
\begin{cases}
\{ F' \subseteq \delta_{G'}(v) \colon |F'| = b(v) \} & \mbox{if $v \in V$}, \\
\{ F' \subseteq \delta_{G'}(v) \colon \left(d_{F'}(u)\right)_{u \in V(K)} \in D_K\} & \mbox{if $v = r_K$ for some $K \in {\cal K}$.}
\end{cases}
\end{align}
\end{itemize}
Note that 
each $D_K$ and each ${\cal F}_v$ can be computed efficiently in a brute force way: 
$|V(K)| = O(t)$ and 
hence $D_K$ has $t^{O(t)}$ elements 
for the fixed integer $t$; 
and 
${\cal F}_v$ has a polynomial size.

Now we have constructed an instance of
the Boolean edge-CSP
consisting of $G'=(V', E')$ and $(\F_v)_{v \in V'}$. 
We first show 
the following claim, 
which implies that 
this instance actually belongs to
\BECG.

\begin{claim}
\label{clm:01}
For each $v \in V'$, 
the set $\{\chi_{F'} \in \mathbb{Z}^{\delta_{G'}(v)} \colon F' \in {\cal F}_v\}$ 
of the characteristic vectors 
of the edge sets in ${\cal F}_v$ 
is 
a constant-parity jump system. 
\end{claim}

\begin{proof}[Proof of Claim \ref{clm:01}]
  If $v \in V$, then the claim follows from the fact that ${\cal F}_v$ is the basis family of a uniform matroid. 
  Suppose that $v = r_K$ for $K \in {\cal K}$. 
  By applying \Cref{thm:parititejump}
  with $G = K$ and ${\cal K} = \{K\}$, we obtain that $D_K$ is a constant-parity jump system. 
  Now, 
  $\{\chi_{F'} \in \mathbb{Z}^{\delta_{G'}(v)} \colon F' \in {\cal F}_v\}$ is 
  obtained from 
  splitting 
  $D_K$ to  
  $\bigcup_{u \in V(K)} E'_{u,K}$ 
  and then 
  taking the intersection with 
  a box $\{x \in \mathbb{R}^{\delta_{G'}(v)} \colon {\bf 0} \le x \le {\bf 1}\}$, 
  and thus 
  is a constant-parity jump system; see Section~\ref{sec:jumpsystem}.  
\end{proof}

It follows from Claim~\ref{clm:01} and Theorem~\ref{thm:EdgeCSP}
that 
the instance 
$(G', (\mathcal{F}_v)_{v\in V'})$ 
belongs to \BECG 
and 
can be solved in polynomial time,
respectively.
Namely, 
we can find an edge set $M' \subseteq E'$ such that 
\begin{equation}
    \delta_{M'}(v) \in {\cal F}_v  \mbox{ for each $v \in V'$} \label{eq:01} 
\end{equation}
or conclude that such $M'$ does not exist 
in polynomial time. 
In what follows, we show that the existence of such an edge set $M' \subseteq E'$ is equivalent to the existence of a ${\cal K}$-free $b$-factor in the original graph $G$. 
\begin{claim}
\label{clm:02}
The graph $G'$ has 
an edge set $M' \subseteq E'$ satisfying~\eqref{eq:01} if and only if 
the original graph $G$ has a ${\cal K}$-free $b$-factor $M \subseteq E$. 
\end{claim}
\begin{proof}[Proof of Claim \ref{clm:02}]
We first show the sufficiency (``if'' part).  
Let $M \subseteq E$ be a ${\cal K}$-free $b$-factor in $G$. 
We construct an edge set $M' \subseteq E'$ satisfying \eqref{eq:01} in the following way. 
For each subgraph $K \in {\cal K}$, let $F_K \subseteq \delta_{G'}(r_K)$ be an edge set in $G'$
composed of exactly $d_{M \cap E(K)}(u)$ parallel edges between $u$ and $r_K$ for each vertex $u \in V(K)$. 
Note that such an edge set $F_K$ must exist, because $M$ is a $b$-factor, $b(u) \le t$, 
and $G'$ has $t$ parallel edges between $u$ and $r_K$.
Now define $M' \subseteq E'$ by 
$$M' = \left(M \setminus \bigcup_{K \in {\cal K}} E(K)\right) \cup \bigcup_{K \in {\cal K}} F_K.$$

Here we show 
that this edge set $M'$ satisfies (\ref{eq:01}).
If $v \in V$, 
it holds that $\delta_{M'}(v) \in {\cal F}_{v}$, 
since 
$|\delta_{M'}(v)| = |\delta_{M}(v)| = b(v)$. 
Let $K \in {\cal K}$ and $v=r_K$.  
The fact that $M$ is $\K$-free 
implies 
$$(d_{M \cap E(K)}(u))_{u\in V(K)} \in D_K.$$
Since $d_{F_K}(u) = d_{M \cap E(K)}(u)$ for each vertex $u \in V(K)$, 
it follows from 
the definition \eqref{eq:defnFv} of $\mathcal{F}_{r_K}$
that $F_K \in {\cal F}_{r_K}$, 
and hence $\delta_{M'}(r_K) = F_K \in {\cal F}_{r_K}$. 
We thus conclude that $M'$ satisfies \eqref{eq:01}. 

We next show the necessity (``only if'' part).  
Let $M' \subseteq E'$ be an edge set satisfying (\ref{eq:01}). 
We construct a $\mathcal{K}$-free $b$-factor $M$ in $G$ in the following manner. 
For each subgraph $K \in {\cal K}$, let $F_K := \delta_{M'}(r_K)$. 
It follows from \eqref{eq:01} that 
$F_K \in {\cal F}_{r_K}$, 
namely, 
there exists a $b$-matching $N_K \subsetneq E(K)$ 
such that 
$d_{N_K}(u)= d_{F_K}(u)$ for each vertex $u\in V(K)$. 
Now define $M\subseteq E$ by 
$$M = \left(M' \setminus \bigcup_{K \in {\cal K}} F_K\right) \cup \bigcup_{K \in {\cal K}} N_K.$$ 

We complete the proof by showing that 
$M$ is a ${\cal K}$-free $b$-factor in $G$. 
Let $v\in V$ be an arbitrary vertex in $G$.
Since 
$d_{F_K}(u) = d_{N_K}(u)$ for each $K \in {\cal K}$ and each $u \in V(K)$, 
it holds that $d_M(v) = d_{M'}(v) =b(v)$, 
where the last equality follows from $\delta_{M'}(v) \in {\cal F}_v$. 
We thus have that $M$ is a $b$-factor. 
Furthermore, since 
$N_K \subsetneq E(K)$ 
for each $K \in {\cal K}$, we conclude that $M$ is ${\cal K}$-free. 
\end{proof}

The proof of Claim \ref{clm:02} provides a polynomial-time construction of 
a ${\cal K}$-free $b$-factor $M$ in $G$ 
from an edge set $M' \subseteq E'$ satisfying (\ref{eq:01}). 
We thus conclude that 
the original instance $(G,b,\mathcal{K})$ of 
\KbFP 
can be solved in polynomial time.  
\end{proof}

By using Theorem \ref{thm:disjointbfactor}, we can give a polynomial-time algorithm
for 
\MKbMP 
under the same assumptions. 

\begin{theorem}
\label{thm:disjointbmatching}
For a fixed positive integer $t$, 
\MKbMP 
can be solved in polynomial time if 
$b(v) \le t$ for each $v \in V$ and 
all the subgraphs in ${\cal K}$ are 
$t$-regular complete partite 
and pairwise edge-disjoint.  
\end{theorem}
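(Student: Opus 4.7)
The plan is to combine the three ingredients already at our disposal: \Cref{thm:parititejump}, \Cref{lem:factor2matching}, and the $b$-factor algorithm of \Cref{thm:disjointbfactor}. The edge-disjointness hypothesis is only used to invoke \Cref{thm:disjointbfactor}; the fact that the degree sequences form a jump system holds from complete-partiteness alone.

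First, I would apply \Cref{thm:parititejump} to the input $(G,b,\mathcal{K})$: since every member of $\mathcal{K}$ is $t$-regular complete partite and $b(v)\le t$ for each $v\in V$, the set $J\subseteq\mathbb{Z}^V$ consisting of the degree sequences of all $\mathcal{K}$-free $b$-matchings in $G$ is a constant-parity jump system. This places us exactly in the setting of \Cref{lem:factor2matching}.

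Next, I would invoke \Cref{lem:factor2matching} to reduce the problem of finding a maximum $\mathcal{K}$-free $b$-matching to testing the existence of a $\mathcal{K}$-free $b'$-factor in $G$ for polynomially many vectors $b'\in\mathbb{Z}_+^V$ with $b'\le b$. For every such $b'$, we have $b'(v)\le b(v)\le t$ for each $v\in V$, and the family $\mathcal{K}$ is unchanged, so each subproblem still satisfies the hypotheses (edge-disjointness and $t$-regular complete partiteness) required by \Cref{thm:disjointbfactor}. Hence each $b'$-factor test can be performed in polynomial time. Since the number of tests is polynomial, so is the total running time.

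There is no substantive obstacle here: the theorem follows by a direct composition of the pieces already established, with \Cref{thm:disjointbfactor} playing the role of the membership oracle for $J$ inside the greedy optimization over the jump system used in the proof of \Cref{lem:factor2matching}. The one minor point to verify is that, in the reduction of \Cref{lem:factor2matching}, every queried vector $b'$ satisfies $b'\le b\le t\cdot\mathbf{1}$, so the condition ``$b(v)\le t$'' in \Cref{thm:disjointbfactor} is preserved throughout the iterations; this is immediate from $b'\le b$.
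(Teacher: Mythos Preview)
Your proposal is correct and follows essentially the same route as the paper: apply \Cref{thm:parititejump} to obtain the constant-parity jump system of degree sequences, invoke \Cref{lem:factor2matching} to reduce to polynomially many $b'$-factor existence queries, and answer each query via \Cref{thm:disjointbfactor}. The paper's proof is the same three-line composition; your version simply spells out the verification that $b'\le b$ preserves the hypothesis $b'(v)\le t$, which the paper leaves implicit.
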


\begin{proof}
It follows from \Cref{thm:parititejump} that
the set of the degree sequences of all ${\cal K}$-free $b$-matchings in $G$ 
is a constant-parity jump system. 
Therefore, 
by \Cref{lem:factor2matching} and \Cref{thm:disjointbfactor}, 
we can solve 
\MKbMP 
in polynomial time. 
\end{proof}

We remark that 
Theorem~\ref{thm:disjointtmatching} 
 is immediately derived from 
 Theorem \ref{thm:disjointbmatching}
 by setting $b(v) = t$ for every $v \in V$.

As described in Section \ref{sec:intro}, 
the edge-disjointness of the subgraphs in $\mathcal{K}$ 
is relaxed to the condition~\eqref{cond:star}, which we restate here: 
\begin{enumerate}[label=(\cond)]
\item
The subgraph family $\mathcal{K}$ is partitioned into subfamilies $\mathcal{K}_1, \dots , \mathcal{K}_\ell$ such that 
    \begin{itemize}
        \item for each subfamily $\mathcal{K}_i$ ($i=1,\ldots, \ell$), the number $\left|\bigcup_{K \in \mathcal{K}_i} V(K)\right|$ of vertices is bounded by a fixed constant, and  
        \item for distinct subfamilies $\mathcal{K}_i$ and $\mathcal{K}_j$ ($i,j \in \{1, \dots , \ell\})$ and for 
        each pair of subgraphs $K \in \mathcal{K}_i$ and $K' \in \mathcal{K}_j$, 
        it holds that $K$ and $K'$ are edge-disjoint. 
    \end{itemize}
\end{enumerate}

\begin{theorem}
\label{thm:starbmatching}
    For a fixed positive integer $t$, 
    \KbFP and \MKbMP 
    can be solved in polynomial time
    if $b(v) \le t$ for each $v \in V$ and 
    $\K$ is a family of $t$-regular complete partite subgraphs of $G$ 
    and satisfies the condition~\eqref{cond:star}. 
\end{theorem}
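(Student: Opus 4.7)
The plan is to mimic the proof of Theorem~\ref{thm:disjointbfactor} but to treat each subfamily $\mathcal{K}_i$ as a single gadget instead of handling each subgraph $K \in \mathcal{K}$ individually. The condition~\eqref{cond:star} guarantees that the edge sets $F_i := \bigcup_{K \in \mathcal{K}_i} E(K)$ are pairwise disjoint for distinct $i$, so the gadgets do not interfere with one another or with the edges outside $\bigcup_i F_i$. Moreover, since each vertex set $U_i := \bigcup_{K \in \mathcal{K}_i} V(K)$ has bounded size, all enumeration within a single gadget can be done in constant time, even though subgraphs within the same $\mathcal{K}_i$ may share edges.

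First I would solve the $\mathcal{K}$-free $b$-factor problem by reducing to \BECG. For each $i \in \{1,\ldots,\ell\}$ introduce a new vertex $r_i$ and, for every $v \in U_i$, introduce $t$ parallel edges $E'_{v,i}$ between $v$ and $r_i$; then delete all edges in $F_i$ from $E$ while keeping the remaining edges unchanged, obtaining $G' = (V', E')$. The constraint at each original vertex $v \in V$ is the uniform matroid basis family $\mathcal{F}_v = \{F' \subseteq \delta_{G'}(v) : |F'| = b(v)\}$. For the new vertex $r_i$ I would compute in constant time the set
\[
D_i = \{ d_F \in \mathbb{Z}_+^{U_i} : \text{$F$ is a $\mathcal{K}_i$-free $b$-matching in $(U_i, F_i)$}\},
\]
and set $\mathcal{F}_{r_i} = \{F' \subseteq \delta_{G'}(r_i) : (d_{F'}(u))_{u \in U_i} \in D_i\}$, where $d_{F'}(u)$ is interpreted via the decomposition $\delta_{G'}(r_i) = \bigcup_{u \in U_i} E'_{u,i}$.

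Verifying that the characteristic vectors of each $\mathcal{F}_v$ form a constant-parity jump system proceeds as in Claim~\ref{clm:01}: for $v \in V$ this is a uniform matroid, and for $v = r_i$ I would apply Theorem~\ref{thm:parititejump} to the graph $(U_i, F_i)$ with forbidden family $\mathcal{K}_i$ to conclude that $D_i$ is a constant-parity jump system, then invoke the splitting operation and intersection with the $\{0,1\}$-box to transfer this property to $\mathcal{F}_{r_i}$. The equivalence between solutions of the resulting \BECG instance and $\mathcal{K}$-free $b$-factors in $G$ mirrors Claim~\ref{clm:02}: any $\mathcal{K}$-free $b$-factor $M$ restricts on each $F_i$ to a $\mathcal{K}_i$-free $b$-matching in $(U_i, F_i)$, and conversely each gadget solution can be lifted to such a restriction because $|U_i|$ is bounded. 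Applying Theorem~\ref{thm:EdgeCSP} then settles the factor problem, and the matching version follows from Theorem~\ref{thm:parititejump} combined with Lemma~\ref{lem:factor2matching}. The only delicate point is the clean partition of $E$ into the sets $\{F_i\}$ and the edges outside every $F_i$, which is precisely what the edge-disjointness across subfamilies in~\eqref{cond:star} provides; intra-subfamily edge sharing is harmless because it is absorbed into the tractable set $D_i$.
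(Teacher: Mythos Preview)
Your proposal is correct and follows essentially the same approach as the paper: replace each subfamily $\mathcal{K}_i$ by a single gadget vertex $r_i$ attached by $t$ parallel edges to every vertex of $U_i$, define the constraint at $r_i$ via the degree sequences $D_i$ of $\mathcal{K}_i$-free $b$-matchings in the union graph $(U_i,F_i)$, invoke Theorem~\ref{thm:parititejump} to certify that $D_i$ is a constant-parity jump system, and finish with Theorem~\ref{thm:EdgeCSP} for the factor problem and Lemma~\ref{lem:factor2matching} for the matching problem. The paper's proof is terser but identical in substance.
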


\begin{proof}
It follows from \Cref{thm:parititejump} and \Cref{lem:factor2matching} 
that 
\MKbMP 
can also be solved in polynomial time 
if 
\KbFP 
is so. 
Hence, 
below we prove that 
\KbFP 
can be solved in polynomial time in a similar way to \Cref{thm:disjointbfactor}.

Let $(G,b,\K)$ be
an instance of 
\KbFP,
where 
$G=(V,E)$, $b\in \Z_+^V$, and $\cal K$ 
is a family of subgraphs in $G$ satisfying the condition~\eqref{cond:star}. 
Let $\mathcal{K}_1, \dots , \mathcal{K}_\ell$ be the partition of $\mathcal{K}$ in the condition~\eqref{cond:star}. 

For each $i \in \{1, \dots , \ell\}$, execute the following procedure. 
Let $H_i$ be the graph defined as the union of all $K \in \K_i$,
i.e.,
\begin{align*}
    H_i := \left(\bigcup_{K \in \K_i} V(K), \bigcup_{K \in \K_i} E(K)\right).
\end{align*}
Then, 
\begin{itemize}
\item 
add a new vertex $r_i$ and $t$ parallel edges between $r_i$ and $v$ for each $v \in V(H_i)$, and remove the original edges in $E(H_i)$; and
\item 
compute a set 
$D_{H_i} \subseteq \mathbb{Z}_+^{V(H_i)}$
of the degree sequences in the $\K_i$-free $b$-matchings in $H_i$, i.e.,\ 
\begin{align*}
D_{H_i} = \left\{d_F \in \mathbb{Z}_+^{V(H_i)} \colon \mbox{$F$ is a $\K_i$-free $b$-matching in $H_i$} \right\}.
\end{align*} 
\end{itemize}

For each $i \in \{1,\ldots, \ell\}$, 
it follows from Theorem~\ref{thm:parititejump} that the set $D_{H_i}$ is a constant-parity jump system. 
We also remark that 
$D_{H_i}$ can be computed efficiently in a brute force way, 
since 
$|V(H_i)|$ and $t$ are bounded by a fixed constant. 

Now, by the same argument as in the proof of \Cref{thm:disjointbfactor}, 
we can solve 
\KbFP 
in polynomial-time
with the aid of \Cref{thm:EdgeCSP}. 
\end{proof}

We conclude this section 
by showing that a subgraph family $\K$ with a certain laminar structure described below satisfies the condition~\eqref{cond:star}.

\begin{corollary}\label{cor:example}
For a fixed positive integer $t$, 
\KbFP and \MKbMP 
can be solved in polynomial time
if $b(v) \le t$ for each $v \in V$ and $\K$ is a family of $t$-regular complete partite subgraphs of $G$
satisfying $E(K) \cap E(K') = \emptyset$,
$V(K) \subseteq V(K')$,
or $V(K) \supseteq V(K')$
for each pair of subgraphs $K, K' \in \K$.
\end{corollary}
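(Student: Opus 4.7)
The plan is to reduce Corollary~\ref{cor:example} to Theorem~\ref{thm:starbmatching} by verifying the condition~\eqref{cond:star}. I partition $\K$ into subfamilies $\K_1, \dots, \K_\ell$ according to the connected components of the graph on $\K$ whose edges join pairs $K, K'$ with $E(K) \cap E(K') \neq \emptyset$. By construction, any two subgraphs belonging to distinct subfamilies are edge-disjoint, which immediately yields the second bullet of \eqref{cond:star}. It therefore remains to show that for each $i$, the vertex union $\bigl|\bigcup_{K \in \K_i} V(K)\bigr|$ is bounded by a constant depending only on $t$.

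The heart of the argument is the following claim: within each $\K_i$, there exists a subgraph $K_i^* \in \K_i$ such that $V(K) \subseteq V(K_i^*)$ for every $K \in \K_i$. Once this is established, since any $t$-regular complete partite graph on $n$ vertices satisfies $n = pt/(p-1) \le 2t$ for $p \ge 2$ color classes, the vertex union is bounded by $2t$, yielding the first bullet of \eqref{cond:star}.

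To prove the claim, pick $K_i^* \in \K_i$ with $|V(K_i^*)|$ maximum (ties broken arbitrarily) and argue by induction on the distance $d(K, K_i^*)$ in the edge-sharing graph on $\K_i$. For the inductive step, let $L$ be the subgraph preceding $K$ on a shortest path from $K$ to $K_i^*$, so $V(L) \subseteq V(K_i^*)$ by induction. Since $K$ and $L$ share an edge, the hypothesis forces them to be vertex-nested; the only nontrivial case is $V(L) \subsetneq V(K)$, whence $W := V(K) \cap V(K_i^*) \supseteq V(L)$ satisfies $|W| \ge |V(L)| \ge t+1$ (any $t$-regular graph has at least $t+1$ vertices). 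Applying the hypothesis now to $(K, K_i^*)$ gives either that they are vertex-nested (in which case $V(K) \subseteq V(K_i^*)$ by the maximality of $K_i^*$) or that they are edge-disjoint.

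The main obstacle is ruling out the edge-disjoint case, which I handle by a color-class argument. Edge-disjointness of $K$ and $K_i^*$ is equivalent to the following: for every $u, v \in W$ lying in different color classes of $K$, the vertices $u, v$ must lie in a common color class of $K_i^*$. A short case analysis depending on whether $W$ intersects one or more than one color class of $K_i^*$ forces $W$ to be contained in a single color class of either $K$ or $K_i^*$. However, every color class of a $t$-regular complete partite graph on $n \le 2t$ vertices has size $n - t \le t$, contradicting $|W| \ge t+1$. This completes the induction, and hence the proof.
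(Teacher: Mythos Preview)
Your proof is correct, and it follows the same overall strategy as the paper: verify condition~\eqref{cond:star} and invoke Theorem~\ref{thm:starbmatching}. The route, however, is genuinely different in its decomposition. The paper partitions $\K$ by the inclusionwise \emph{maximal} vertex sets $X \in \{V(K) : K \in \K\}$, setting $\K_X = \{K \in \K : V(K) \subseteq X\}$; this makes the vertex bound immediate (each $\K_X$ lives inside a single $V(K)$ of size at most $2t$), and the work goes into proving edge-disjointness across parts. You take the dual approach, partitioning by connected components of the edge-sharing graph, which makes edge-disjointness across parts tautological and shifts the work to the vertex bound via your ``dominating element'' claim.

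Both arguments ultimately rest on the same combinatorial kernel: if two $t$-regular complete partite subgraphs share at least $t+1$ vertices, they must share an edge. The paper proves this by observing that the complement of one induced subgraph on the overlap is disconnected while the other induced subgraph is connected; you prove it by the color-class size bound ($n - t \le t$), which is arguably more direct. Your inductive walk along a shortest path in the edge-sharing graph is a clean device absent from the paper. Either decomposition yields a valid partition for~\eqref{cond:star}, though they need not coincide (the paper's partition can be strictly coarser when edge-disjoint members of $\K$ happen to have nested vertex sets).
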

\begin{proof}
We construct a partition of $\K$ 
certifying that $\K$ satisfies the condition \eqref{cond:star}.  
Then the corollary immediately follows from  \Cref{thm:starbmatching}. 

Let $\mathcal{X}^*$ be the family of all inclusionwise maximal sets in $\{ V(K) \colon K \in \K \}$.
For each vertex set $X \in \mathcal{X}^*$, 
define a subfamily $\K_X$ of $\K$ by $\K_X := \{ K \in \K \colon V(K) \subseteq X \}$.
It suffices to show 
that $E(K) \cap E(K') = \emptyset$ 
for each pair of distinct vertex sets $X, X' \in \mathcal{X}^*$ and for 
each pair of subgraphs 
$K \in \K_X$ and $K' \in \K_{X'}$; 
this implies that $\K_X$ ($X \in \mathcal{X}^*$) form a partition of $\K$ satisfying the condition~\eqref{cond:star}.

Suppose to the contrary that
$E(K) \cap E(K') \neq \emptyset$
for some distinct vertex sets $X, X' \in \mathcal{X}^*$ 
and for some subgraphs $K \in \K_X$ and $K' \in \K_{X'}$. 
It follows from  the assumption of $\K$ 
that $V(K) \subseteq V(K')$ or $V(K) \supseteq V(K')$. 
Without loss of generality, 
assume $V(K) \subseteq V(K')$.
Let $K_X \in \K_X$ (resp.~$K_{X'} \in \K_{X'}$) be a $t$-regular complete partite graph attaining $V(K_X) = X$ (resp.~$V(K_{X'}) = X'$).
It follows from the maximality of $X$ and $X'$ that $V(K_X) \not\subseteq V(K_{X'})$ and $V(K_X) \not\supseteq V(K_{X'})$.

In the following, we prove that $E(K_X) \cap E(K_{X'}) \neq \emptyset$,
which contradicts 
the assumption of $\K$.
Define a vertex set $Y$ by $Y := V(K_X) \cap V(K_{X'})$.
It is derived from  $V(K) \subseteq X = V(K_X)$ and $V(K) \subseteq V(K') \subseteq X' = V(K_{X'})$ 
that $V(K) \subseteq Y$.
It then follows that  $|Y| \geq t+1$,
which implies that both of the induced subgraphs $K_X[Y]$ and $K_{X'}[Y]$ are complete partite graphs having at least two color classes.
Since the complement of $K_X[Y]$ is the disjoint union of (at least two) complete graphs,
it is disconnected.
On the other hand, $K_{X'}[Y]$ is connected.
Hence we have $E(K_X[Y]) \cap E(K_{X'}[Y]) \neq \emptyset$,
implying that  $E(K_X) \cap E(K_{X'}) \neq \emptyset$.
\end{proof}

\section{Degree Bounded Graphs}
\label{sec:boundeddegree}
In this section, we consider the case where the maximum degree of $G$ is at most $2t-1$.

\begin{theorem}
\label{thm:degreebfactor}
For a fixed positive integer $t$,  
\KbFP 
can be solved in polynomial time if 
the maximum degree of $G$ is at most $2t-1$, 
$b(v) \le t$ for each $v \in V$, and 
all the subgraphs in 
${\cal K}$ are $t$-regular complete partite. 
\end{theorem}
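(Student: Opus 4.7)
The plan is to reduce the problem to \KbFP under the condition \eqref{cond:star}, and then apply \Cref{thm:starbmatching}. Concretely, I will construct a subfamily $\K' \subseteq \K$ satisfying \eqref{cond:star} such that a $\K$-free $b$-factor exists in $G$ if and only if a $\K'$-free $b$-factor does, together with a polynomial-time procedure for converting the latter into the former.

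First, I would preprocess by discarding every $K \in \K$ with $b(v) < t$ for some $v \in V(K)$, since no $b$-matching can then contain $E(K)$. Next, the crucial structural consequence of the degree bound $2t-1$ is this: any two distinct $K, K' \in \K$ sharing a vertex $v$ must share at least $2t - d_G(v) \geq 1$ edges at $v$, because each of $K$ and $K'$ uses $t$ of the at most $2t-1$ edges at $v$, so inclusion--exclusion forces a common edge. Consequently, ``vertex-sharing'' and ``edge-sharing'' coincide on $\K$, every vertex of $G$ lies in at most $\binom{2t-1}{t}$ members of $\K$, and $V(K) \subseteq N_G^{\le 2}[v]$ for every $K \in \K$ containing $v$, where $|N_G^{\le 2}[v]| = O(t^2)$. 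In particular, the ``conflict graph'' on $\K$, in which two subgraphs are adjacent when they share an edge, has maximum degree bounded by a function of $t$ alone.

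Using these local boundedness properties, I would construct $\K'$ iteratively: repeatedly discard a subgraph $K \in \K$ that is either \emph{infeasible} (no $b$-factor of $G$ contains $E(K)$, as happens for instance with interior members of long ``chains'' of $K$'s whose ends are forced by the degree bound) or \emph{implied} by already selected representatives, so that the edge-sharing components of the remaining family have bounded vertex union---thus satisfying \eqref{cond:star}. The $(\Rightarrow)$ direction of the equivalence is trivial since $\K' \subseteq \K$; for $(\Leftarrow)$, given a $\K'$-free $b$-factor $M$ containing some $K \in \K \setminus \K'$, I would exploit the constant-parity jump system structure of $\K$-free $b$-matchings (\Cref{thm:parititejump}) together with the bounded local conflict structure to perform a polynomial-time sequence of local swaps on $M$ that removes every element of $\K \setminus \K'$ without introducing any element of $\K'$. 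The main obstacle is the joint design of $\K'$ and the swap sequence: the notions of ``implied'' and ``infeasible'' must be calibrated so that $\K'$ satisfies \eqref{cond:star}, and the swaps must always be completable without creating a new forbidden subgraph, which requires a delicate analysis of how $t$-regular complete partite subgraphs can overlap under the $2t-1$ degree bound.
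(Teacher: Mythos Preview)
Your high-level plan---build a subfamily $\K'$ satisfying \eqref{cond:star}, solve via \Cref{thm:starbmatching}, then repair by local swaps---is exactly the paper's framework. But the two steps you explicitly flag as ``the main obstacle'' are the entire content of the proof, and your suggested directions for them are not the ones that work.

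For the construction of $\K'$: you aim to prune $\K$ until its edge-sharing components have bounded vertex union. This is not how the paper proceeds, and it is not clear your approach succeeds; edge-sharing components can be arbitrarily long chains, and neither ``infeasible'' nor ``implied'' is defined, so there is no argument that pruning terminates with bounded components. The paper's construction is much simpler and ignores the conflict-graph structure entirely: greedily select a family $\mathcal{X}^* = \{X_1,\dots,X_\ell\}$ of pairwise \emph{vertex-disjoint}, inclusionwise maximal sets from $\{V(K) : K \in \K\}$, and set $\K^* = \{K \in \K : V(K) \subseteq X_i \text{ for some } i\}$. Condition \eqref{cond:star} is then immediate (each $X_i$ has at most $2t$ vertices and the $X_i$ are disjoint). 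The only property needed later is that every $K \in \K \setminus \K^*$ has $V(K)$ meeting some $X_i$ without containing it.

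For the swap: invoking the jump-system structure of \Cref{thm:parititejump} is a red herring here; the repair step is a concrete $4$-cycle exchange inside a single $K^* \in \K^*$. Given $E(K) \subseteq M$ with $K \notin \K^*$, take $K^*$ with $V(K^*) = X_i$, $V(K) \cap V(K^*) \neq \emptyset$, $V(K^*) \not\subseteq V(K)$. Your pigeonhole observation gives $e = uu' \in E(K) \cap E(K^*) \subseteq M$ at a shared vertex $u$; the same pigeonhole at some $v \in V(K^*)\setminus V(K)$ (where $b(v)=t$) gives $e^* = vv' \in M \cap E(K^*)$, and $v' \notin V(K)$ because $K$ is a component of $(V,M)$. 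Complete-partiteness of $K^*$ yields the opposite diagonal $f = uv$, $f' = u'v'$ (up to relabelling), and $M' = (M\setminus\{e,e^*\})\cup\{f,f'\}$ is a $b$-factor with strictly fewer forbidden subgraphs: the verification that no new $K' \in \K$ is created reduces to showing $u,u',v$ would all lie in one color class of $K'$, contradicting $uv \in E(K')$. None of this uses jump systems; it is a direct graph-theoretic argument, and it is precisely the ``delicate analysis'' you left open.
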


\begin{proof}
If $t=1$, then the problem is trivial, 
because the maximum degree is one and 
a $t$-regular complete partite subgraph must be composed of a single edge. 
Therefore, it suffices to consider the case where $t \ge 2$. 

Without loss of generality, we may assume that each subgraph $K \in {\cal K}$ satisfies 
\begin{equation}
b(v) = t \mbox{ for each vertex } v \in V(K), \label{eq:assumption1}
\end{equation}
since otherwise we can remove $K$ from ${\cal K}$.

Define a vertex subset family $\mathcal{X}\subseteq 2^V$ by ${\cal X} = \{V(K) \colon K \in {\cal K}\}$. 
Construct a subfamily ${\cal X}^* \subseteq {\cal X}$ 
of disjoint vertex subsets in $\mathcal{X} $ in the following manner: 
start with ${\cal X}^* = \emptyset$; and  while there exists 
a set in ${\cal X}$ disjoint from every set in ${\cal X}^*$, 
add an inclusionwise maximal one to ${\cal X}^*$. 
We denote ${\cal X}^* = \{X_1, X_2, \dots , X_\ell \}$. 
It follows from 
the construction that ${\cal X}^* \subseteq {\cal X}$ satisfies the following property: 
\begin{equation}
\mbox{for each $X \in {\cal X} \setminus {\cal X}^*$, there exists 
$X_i \in \mathcal{X}^*$
such that  $X \cap X_i \neq \emptyset$ and $X_i \not\subseteq X$.} \label{eq:assumption3}
\end{equation}

For each $X_i \in \mathcal{X}^*$, 
let ${\cal K}_i = \{K \in {\cal K} \colon V(K) \subseteq X_i\}$ 
and 
let $H_i$ be 
the union of all subgraphs in ${\cal K}_i$, i.e., $$H_i = \left(X_i,  \bigcup_{K \in {\cal K}_i} E(K)\right).$$ 
Let ${\cal K}^* = \bigcup_{i=1}^\ell {\cal K}_{i}$. 
Note that 
${\cal K}_{1}, \dots , {\cal K}_{\ell}$  form a partition of $\K^*$, 
and they 
satisfy 
the condition \eqref{cond:star}. 

By using Theorem~\ref{thm:starbmatching}, in polynomial time, 
we can find a ${\cal K}^*$-free $b$-factor $M$ in $G$ or 
conclude that 
$G$ has no ${\cal K}^*$-free $b$-factor. 
In the latter case, 
we can conclude that 
$G$ has no ${\cal K}$-free $b$-factor, because 
$\K^*$ is a subfamily of $\K$. 
In the former case, 
we transform $M$ into a ${\cal K}$-free $b$-factor as shown in the following claim.

\begin{claim}\label{clm:reduction}
Given a ${\cal K}^*$-free $b$-factor $M$ in $G$, 
we can construct a ${\cal K}$-free $b$-factor in polynomial time. 
\end{claim}

\begin{proof}[Proof of \Cref{clm:reduction}]
For a $b$-factor $M$ in $G$, 
define a subgraph family $\K(M)$ by $$\K(M) = \{ K \in {\cal K} \colon  E(K) \subseteq M \},$$
the set of forbidden subgraphs included in $M$. 
Obviously, $M$ is ${\cal K}$-free if and only if $\K(M) = \emptyset$. 
In what follows, 
given a $\K^*$-free $b$-factor $M$, we modify $M$ so that $\K(M)$ becomes smaller. 

Let $M$ be a ${\cal K}^*$-free $b$-factor and suppose that $\K(M) \neq \emptyset$.  
Then, there exists a subgraph $K \in {\cal K} \setminus {\cal K}^*$ such that 
$K\in \K(M)$, i.e.,\ 
$E(K) \subseteq M$. 
It follows from $K \in {\cal K} \setminus {\cal K}^*$ that $V(K) \in {\cal X} \setminus {\cal X}^*$. 
Then,  (\ref{eq:assumption3}) implies that  
there exists $X_i\in \mathcal{X}^*$ such that 
\[V(K) \cap X_i \neq \emptyset \quad \mbox{and} \quad X_i \not\subseteq V(K).\] 
It holds that $ X_i= V(K^*)$ for some 
$K^* \in {\cal K}_i$, 
which follows from  
the construction of ${\cal X}^*$ and 
the definition of ${\cal K}_i$. 
We thus obtain 
$$V(K) \cap V(K^*) \neq \emptyset \quad \mbox{and} \quad V(K^*) \not\subseteq V(K).$$ 

Take a vertex $u$ in $V(K) \cap V(K^*)$. 
Since $|\delta_{K}(u)| = |\delta_{K^*}(u)| = t$ and $|\delta_G(u)| \le 2t-1$, 
there exists an edge $e \in \delta_{K}(u) \cap \delta_{K^*}(u)$, in particular $e \in E(K) \cap E(K^*)$. 
We denote $e = u u'$. Note that $e \in M$ since $E(K) \subseteq M$.

Since $V(K^*) \not\subseteq V(K)$, there exists a vertex $v \in V(K^*) \setminus V(K)$. 
From (\ref{eq:assumption1}) and $K^* \in \mathcal{K}$, we obtain $|\delta_M(v)| = b(v) = t$. 
It then follows from $|\delta_G(v)| \le 2t-1$ and $|\delta_{K^*}(v)| = t$ that $\delta_M(v) \cap \delta_{K^*}(v) \neq \emptyset$, that is, 
there exists an edge $e^* \in \delta_{K^*}(v)$ contained in $M$.  
We denote $e^* = v v'$. 
Since $K$ is a connected component of the subgraph induced by $M$ (see \Cref{rmk:connected}), 
it holds that $v' \in V(K^*) \setminus V(K)$; see Figure~\ref{fig:degreeboundedcase}.
\begin{figure}
\centering
\includegraphics[width=6cm]{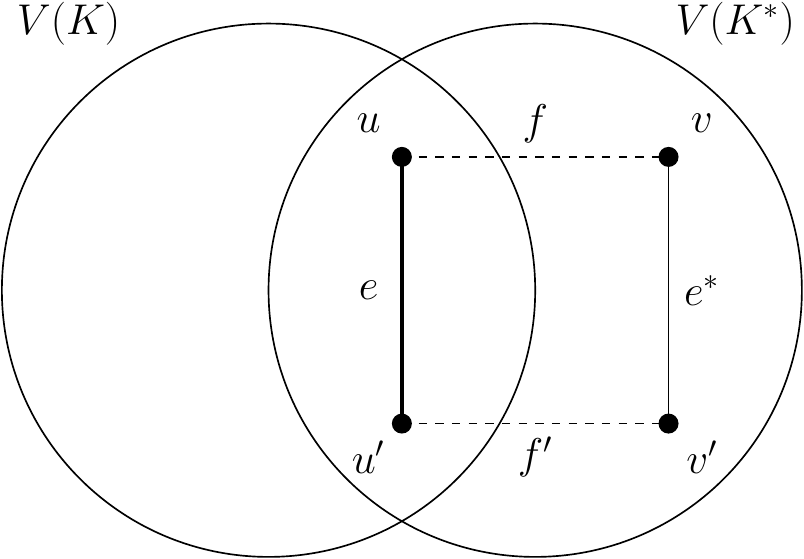}
\caption{All of the edges are in $E(K^*)$
and, particularly,
all of the solid edges are in $M$.
The solid bold edge is in $E(K^*) \cap E(K)$
and
the other thin edges are in $E(K^*) \setminus E(K)$.}
\label{fig:degreeboundedcase}
\end{figure}

Since $e, e^* \in E(K^*)$ and $K^*$ is a complete partite graph, 
$u$ and $u'$ are contained in different color classes of $K^*$, and 
so are $v$ and $v'$. 
This shows that $K^*$ contains two edges:  
$uv$ and $u'v'$; or $uv'$ and $u'v$. 
By symmetry, assume that $f=u v$ and $f'=u' v'$ are contained in $K^*$; see Figure~\ref{fig:degreeboundedcase} again. 
Note that $f$ and $f'$ are not contained in $M$, because $\delta_M(u) = \delta_K(u)$ and $\delta_M(u') = \delta_K(u')$ hold. 

Define $M' = (M \setminus \{e, e^*\}) \cup \{f , f'\}$, which is also a $b$-factor. 
In what follows, we prove that $M'$ is the desired $\K^*$-free $b$-factor, 
i.e., 
$\K(M') \subsetneq \K(M)$. 
Since $K \not\in \K(M')$, 
it suffices to show that $\K(M') \subseteq \K(M)$.

Assume to the contrary that there exists a subgraph $K' \in \K(M') \setminus \K(M)$. 
Then, $K'$ must contain at least one of $f$ and $f'$, 
and without loss of generality assume that $f \in E(K')$.  
Since $K - e$ is connected by $t \ge 2$ and $M'$ contains $(E(K) \setminus \{e\}) \cup \{f\}$ by $e^* \notin E(K)$,
it follows from \Cref{rmk:connected} that $V(K) \cup \{v\}$ is contained in $K'$, 
in particular $u, u', v \in V(K')$. 

Since all the edges in $\delta_M(u')$ are contained in $K$ and $v \not\in V(K)$, 
$M$ has no edge connecting $u'$ and $v$, and neither does $M'$. 
It then follows from $K'\in \K(M')$, i.e.,\ $E(K') \subseteq M'$,  
that $u' v \not\in E(K')$.  
Since $e$ is the only edge in $M$ connecting $u$ and $u'$, we have $u u' \not\in M'$, 
which implies that $u u' \not\in E(K')$. 
It now follows from $u' v, u u' \not\in E(K')$ 
that 
$u, u'$ and $v$ are contained in the same color class of $K'$, 
since $K'$ is complete partite. 
This contradicts the fact that $K'$ contains $f=uv$, 
and thus we conclude that $\K(M') \subsetneq \K(M)$. 

By repeating the above procedure, 
we obtain a $b$-factor $M$ with $\K(M) = \emptyset$, 
i.e.,\ 
$M$ is $\K$-free. 
It is straightforward to see that this procedure can be executed in polynomial time, which completes the proof. 
\end{proof}

Therefore, we conclude that 
\KbFP 
can be solved in polynomial time. 
\end{proof}

From \Cref{thm:degreebfactor}, 
we can derive the following theorem by applying the same argument as \Cref{thm:disjointbmatching}.

\begin{theorem}
\label{thm:degreebmatching}
For a fixed positive integer $t$, 
\MKbMP 
can be solved in polynomial time if
the maximum degree of $G$ is at most $2t-1$, 
$b(v) \le t$ for each $v \in V$, and 
all the subgraphs in ${\cal K}$ are $t$-regular complete partite. 
\end{theorem}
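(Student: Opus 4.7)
The plan is to mirror the derivation of Theorem~\ref{thm:disjointbmatching} from Theorem~\ref{thm:disjointbfactor}, but with Theorem~\ref{thm:degreebfactor} playing the role of the $b$-factor subroutine. Concretely, I would combine three ingredients already established earlier in the paper: the jump system structure of the degree sequences (Theorem~\ref{thm:parititejump}), the reduction from the matching version to repeated factor tests (Lemma~\ref{lem:factor2matching}), and the polynomial-time algorithm for \KbFP under the degree bound (Theorem~\ref{thm:degreebfactor}).

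First, observe that the hypotheses of Theorem~\ref{thm:parititejump} are satisfied, since $b(v) \le t$ for each $v \in V$ and every subgraph in $\mathcal{K}$ is $t$-regular complete partite. Hence the set
\begin{equation*}
J = \{ d_F \in \mathbb{Z}_+^V \colon F \subseteq E \text{ is a } \mathcal{K}\text{-free } b\text{-matching in } G \}
\end{equation*}
is a constant-parity jump system. Next, invoke Lemma~\ref{lem:factor2matching}: to find a $\mathcal{K}$-free $b$-matching of maximum cardinality it suffices to decide, for polynomially many vectors $b' \in \mathbb{Z}_+^V$ with $b' \le b$, whether $G$ admits a $\mathcal{K}$-free $b'$-factor. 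Each such decision is an instance of \KbFP in which the graph, the forbidden family $\mathcal{K}$, and the degree bound are inherited from the input. In particular, the maximum degree of $G$ remains at most $2t-1$, the subgraphs in $\mathcal{K}$ remain $t$-regular complete partite, and $b'(v) \le b(v) \le t$ for every $v \in V$, so the hypotheses of Theorem~\ref{thm:degreebfactor} are met.

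Applying Theorem~\ref{thm:degreebfactor} to each such $b'$ yields the required membership oracle for $J$ in polynomial time, and Lemma~\ref{lem:factor2matching} then produces a $\mathcal{K}$-free $b$-matching of maximum cardinality in overall polynomial time. There is no genuine obstacle here beyond verifying that the three hypotheses propagate through the reduction; the only thing to double-check is that changing $b$ to $b' \le b$ preserves both the degree-bound assumption $b'(v) \le t$ and the structural assumption on $\mathcal{K}$, which is immediate.
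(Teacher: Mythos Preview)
Your proposal is correct and follows exactly the same approach as the paper: the paper states that Theorem~\ref{thm:degreebmatching} is derived from Theorem~\ref{thm:degreebfactor} ``by applying the same argument as \Cref{thm:disjointbmatching},'' which is precisely the combination of Theorem~\ref{thm:parititejump}, Lemma~\ref{lem:factor2matching}, and the $b$-factor subroutine that you spell out.
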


From \Cref{thm:degreebmatching}, 
we immediately obtain \Cref{thm:degreetmatching}  by setting $b(v) = t$ for every $v \in V$.

\section{Generalization: Forbidden Degree Sequences}
\label{sec:generalization}

In this section, we extend 
Theorems~\ref{thm:disjointbfactor}, \ref{thm:disjointbmatching}, and \ref{thm:starbmatching} 
so that 
the forbidden structure 
is not an edge-disjoint family of $t$-regular subgraphs but 
that of the subgraphs with specified degree sequences. 
A similar problem of \emph{general factors} is of classical and recent interest~\cite{Cor88,DP18,Kob23,Lov73,Seb93,SZ23}.

Let $G=(V, E)$ be a graph 
and 
$\mathcal{H}$ be a family of subgraphs of $G$. 
Each subgraph $H\in \mathcal{H}$ 
is associated with 
a set of degree sequences 
$\overline{D}_H \subseteq \mathbb{Z}_{+}^{V(H)}$.  
Define an edge subset family $\mathcal{F}_H \subseteq 2^{E(H)}$ 
by 
\[
\mathcal{F}_H =\{ F \subseteq E(H) \colon d_F \in \overline{D}_H \}, 
\]
and 
subgraph families  
\begin{align*}
{\cal K}_H = \{(V(H), F)\colon F \in \mathcal{F}_H\}, \quad {\cal K}_{\cal H} = \bigcup_{H \in {\cal H}} {\cal K}_H. 
\end{align*}

We are interested in $\mathcal{K}_{\mathcal{H}}$-free $b$-matchings. 
Namely, 
for a subgraph $H\in \mathcal{H}$, 
$\overline{D}_H$ represents the set of forbidden degree sequences on $V(H)$, 
and 
$\mathcal{K}_H$ represents 
the family of the forbidden subgraphs of $H$, 
i.e., 
those  
attaining the degree sequences in  $\overline{D}_H$. 

We now 
extend Theorems~\ref{thm:disjointbfactor}, \ref{thm:disjointbmatching}, and \ref{thm:starbmatching} in the following way.

\begin{theorem}
\label{thm:generalization}
\KbFP and \MKbMP 
can be solved in polynomial time 
if the following conditions are satisfied: 
\begin{enumerate}
\item 
\label{ENU01}
$b(v)$ is bounded by a fixed constant for each $v \in V$; and 
\item
\label{ENU02}
${\cal K} = {\cal K}_{\cal H}$ 
for an edge-disjoint family ${\cal H}$ of subgraphs of $G$ such that, 
for each $H \in {\cal H}$,  
\begin{enumerate}
\item
$|V(H)|$ is bounded by a fixed constant, and 
\item
$D_H := \{d_F \colon \mbox{$F$ is a $b$-matching in $H$} \} \setminus \overline{D}_H$ is a constant-parity jump system. 
\end{enumerate}
\end{enumerate}
\end{theorem}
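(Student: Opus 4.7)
The plan is to mirror the reduction in Theorem~\ref{thm:disjointbfactor}, replacing the role of $t$-regular complete partite subgraphs with the general edge-disjoint family $\mathcal{H}$, and to derive the $b$-matching version from the $b$-factor version using Lemma~\ref{lem:factor2matching} as in Theorem~\ref{thm:disjointbmatching}.

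For the \KbFP part, I would reduce to \BECG as follows. Construct an auxiliary graph $G' = (V', E')$ with $V' = V \cup \{r_H : H \in \mathcal{H}\}$. For each $H \in \mathcal{H}$ and each $v \in V(H)$, introduce $b(v)$ parallel edges $E'_{v,H}$ between $r_H$ and $v$, and delete the original edges of $H$; edge-disjointness of $\mathcal{H}$ makes the deletion unambiguous. Define
\[
\F_v =
\begin{cases}
\{F' \subseteq \delta_{G'}(v) : |F'| = b(v)\} & \text{if } v \in V, \\
\{F' \subseteq \delta_{G'}(r_H) : (d_{F'}(u))_{u \in V(H)} \in D_H\} & \text{if } v = r_H.
\end{cases}
\]
Three verifications then follow the template of Theorem~\ref{thm:disjointbfactor}. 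First, $\{\chi_{F'} : F' \in \F_v\}$ is a constant-parity jump system for every $v \in V'$: for $v \in V$ it is a uniform matroid basis family, and for $v = r_H$ it is obtained from $D_H$ (a constant-parity jump system by hypothesis) by splitting each coordinate $u \in V(H)$ into the coordinates of $E'_{u,H}$ and then intersecting with the $0/1$ box, both operations preserving the constant-parity jump system property. Second, the instance has polynomial size: the bounds on $|V(H)|$ and $b(v)$ let us enumerate $D_H$ and $\F_{r_H}$ by brute force, and $|\mathcal{H}|$ is polynomial in $|E|$ by edge-disjointness. Third, the equivalence between solutions of this instance and $\K_{\mathcal{H}}$-free $b$-factors of $G$ is verbatim the same as Claim~\ref{clm:02}: given a solution $M'$, replace each block $\delta_{M'}(r_H)$ by any $b$-matching $N_H \subseteq E(H)$ with degree sequence equal to $(d_{M'}(u))_{u \in V(H)} \in D_H$; conversely, any $\K_{\mathcal{H}}$-free $b$-factor $M$ transports to a solution of the Boolean edge-CSP by distributing $d_{M \cap E(H)}$ over the parallel-edge sets $E'_{u,H}$. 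Invoking Theorem~\ref{thm:EdgeCSP} then completes this part.

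For \MKbMP, I would apply Lemma~\ref{lem:factor2matching}, which reduces the task to polynomially many $b'$-factor instances provided the set $J \subseteq \Z^V$ of degree sequences of all $\K_{\mathcal{H}}$-free $b$-matchings of $G$ is a constant-parity jump system. Establishing this is the main obstacle, and I would handle it by a Minkowski-sum decomposition. Let $E_0 = E \setminus \bigcup_{H \in \mathcal{H}} E(H)$. Edge-disjointness of $\mathcal{H}$ implies that every $b$-matching $M$ splits uniquely into $M \cap E_0$ and the pieces $M \cap E(H)$ for $H \in \mathcal{H}$, and $\K_{\mathcal{H}}$-freeness amounts to requiring $d_{M \cap E(H)} \in D_H$ for every $H$. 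Hence $J$ equals the Minkowski sum of $\{d_F : F \subseteq E_0\}$ (a constant-parity jump system, as the degree-sequence system of a graph always is) and, for each $H$, the embedding of $D_H$ into $\Z^V$ by zero-padding outside $V(H)$ (still a constant-parity jump system), intersected with the box $\{x : 0 \le x \le b\}$. Since Minkowski sum and box intersection preserve the constant-parity jump system property (Section~\ref{sec:jumpsystem}), $J$ has the required structure, and combining Lemma~\ref{lem:factor2matching} with the $b$-factor algorithm above yields a polynomial-time algorithm for \MKbMP.
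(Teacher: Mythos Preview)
Your proposal is correct and follows essentially the same approach as the paper. For the $b$-factor part you spell out the adaptation of the Boolean edge-CSP reduction from Theorem~\ref{thm:disjointbfactor} (replacing the $t$ parallel edges by $b(v)$ parallel edges, which is the natural choice in the absence of a single regularity parameter), and for the $b$-matching part you use exactly the paper's Minkowski-sum-plus-box decomposition of $J$ before invoking Lemma~\ref{lem:factor2matching}.
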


\begin{proof}
    Suppose that 
    the conditions \ref{ENU01} and \ref{ENU02} are satisfied. 
    By following the proof of \Cref{thm:disjointbfactor}, we see that 
    \KbFP 
    can be solved in polynomial time. 

    We now prove that 
    \MKbMP
    can be solved in polynomial time. 
    Define $J \subseteq \mathbb{Z}_+^V$ as the set of the degree sequences of all ${\cal K}$-free $b$-matchings in $G$. 
    In order to apply \Cref{lem:factor2matching}, 
    we show that $J$ is a constant-parity jump system.  
    
    Define $J_0\subseteq \Z_+^V$ by 
    $$J_0 = \left\{ d_F \in \mathbb{Z}_+^V \colon F \subseteq E \setminus \bigcup_{H \in {\cal H}} E(H) \right\},$$ which is a constant-parity jump system. 
    For each subgraph $H \in {\cal H}$, 
    regard $D_H \subseteq \mathbb{Z}_+^{V(H)}$ as a subset of $\mathbb{Z}_+^V$ by setting $x(v) = 0$ 
    for each $x \in D_H$ and $v \in V \setminus V(H)$.  
    Then, $J$ is obtained from 
    $J_0$ by taking 
    the Minkowski sum with  $ D_H$ for all $H \in {\cal H}$, 
    and 
    then taking the intersection with a box $\{x \in \mathbb{R}^V \colon {\bf 0} \le x \le b \}$.   
    This shows that $J$ is a constant-parity jump system.  

    Thus, by applying \Cref{lem:factor2matching}, 
    we conclude that 
    \MKbMP 
    can be solved in polynomial time. 
\end{proof}

Observe that 
Theorems~\ref{thm:disjointbfactor} and~\ref{thm:disjointbmatching} are exactly 
special cases 
of Theorem \ref{thm:generalization}, where 
$H$ is a $t$-regular complete partite graph and $\overline{D}_H = \{ (t, t, \dots , t) \}$ 
for each $H \in \mathcal{H}$. 
Observe also that 
Theorem~\ref{thm:starbmatching} is a special case 
of Theorem \ref{thm:generalization}, where 
$\mathcal{H}=\{H_1, \dots , H_\ell\}$.

We conclude this paper with a few applications of \Cref{thm:generalization}.

\begin{example}
\label{EX01}
Suppose that each subgraph $H \in {\cal H}$ is obtained from $K_5$ by removing a matching of size two (which is unique up to isomorphism). 
Let $\overline{D}_H = \{(2, \dots , 2) \}$ for each $H \in {\cal H}$ and let $b(v) = 2$ for each $v \in V$. 
It then follows that $D_H = \{d_F \colon \mbox{$F$ is a $2$-matching in $H$} \} \setminus \{ (2, \dots , 2)\}$ is a constant-parity jump system. 
It also follows that the subgraph family ${\cal K}_H$ consists of cycles of length five in $H$. 
Now 
Theorem~\ref{thm:generalization} shows that we can find a maximum $2$-matching which does not contain the cycles of length five 
in $H$ for each subgraph $H \in \mathcal{H}$. 
This is an interesting contrast to the fact that finding a maximum $C_5$-free $2$-matching in graphs is NP-hard (see \cite{CP80}). 
To the best of our knowledge, 
this is the first polynomially solvable class of the restricted 
$2$-matching problem 
excluding cycles of length five. 
\end{example}

\begin{example}
\label{EX02}
Suppose that each subgraph $H \in {\cal H}$ is obtained from $K_{3,3}$ by removing an edge (which is unique up to isomorphism). 
Let $\overline{D}_H = \{(2, \dots , 2) \}$ for each $H \in {\cal H}$ and let $b(v) = 2$ for each $v \in V$. 
It then follows that $D_H = \{d_F \colon \mbox{$F$ is a $2$-matching in $H$} \} \setminus \{ (2, \dots , 2)\}$ is a constant-parity jump system. 
It also follows that the subgraph family ${\cal K}_H$ consists of the cycles of length six in $H$. 
Now Theorem~\ref{thm:generalization} shows that we can find a maximum $2$-matching which does not contain the cycles of length six in $H$ for each 
subgraph $H\in \mathcal{H}$. 
This is an interesting contrast to the fact that finding a maximum $C_6$-free $2$-matching in bipartite graphs is NP-hard (Geelen, see \cite{Fra03,Kir09}).
Note also that such a graph $H$ (i.e., $K_{3,3} - e$) is discussed by Takazawa~\cite{Tak17DO} as an example of so called \emph{Hamilton-laceable graphs} \cite{Sim78}. 
\end{example}

\section*{Acknowledgments}
The first author was supported by JSPS KAKENHI Grant Numbers JP20K23323, JP20H05795, JP22K17854.
The second author was supported by JSPS KAKENHI Grant Numbers JP20K11692, JP20H05795, JP22H05001. 
The third author was supported by
JSPS KAKENHI Grant 
Number JP20K11699.

\bibliographystyle{abbrv}

\end{document}